\documentclass{amsart}

\usepackage{amsfonts}
\usepackage{amssymb}
\usepackage{amsthm}
\usepackage{eucal}
\usepackage{tikz}
\theoremstyle{plain}
\newtheorem{theorem}{Theorem}[section]
\newtheorem{lemma}[theorem]{Lemma}
\newtheorem{proposition}[theorem]{Proposition}
\newtheorem{corollary}[theorem]{Corollary}

\theoremstyle{definition}

\newtheorem*{acknowledgment}{Acknowledgment}

\newcommand{\F}{\mathfrak{F}}	
\newcommand{\Z}{\mathbb{Z}}
\newcommand{\gen}[1]{\langle #1\rangle}
\newcommand{\U}{\mathcal{U}}
\newcommand{\V}{\mathcal{V}}
\newcommand{\ol}[1]{\overline{#1}}
\newcommand{\normal}{\trianglelefteq}
\newcommand{\nnormal}{\ntrianglelefteq}

\newcommand{\ord}{\mathrm{ord}}
\newcommand{\Aut}{\mathrm{Aut}}

\newcommand{\lra}{\longrightarrow}
\newcommand{\G}{\Gamma}

\begin{document}
\title[Factorization graphs]{Factorization graphs of finite groups}

\author{M. Farrokhi D. G.} 
\address{Department of Mathematics, Institute for Advanced Studies in Basic Sciences (IASBS), and the Center for Research in Basic Sciences and Contemporary Technologies, IASBS, Zanjan 45137-66731, Iran}
\email{m.farrokhi.d.g@gmail.com\\farrokhi@iasbs.ac.ir}

\author{A. Azimi}
\address{Department of Mathematics, University of Neyshabur, Neyshabur, Iran}
%\address{Department of Pure Mathematics, Ferdowsi University of Mashhad, Mashhad, Iran}
\email{ali.azimi61@gmail.com}

\keywords{Factorization, connectivity, bipartite, forbidden structure}
\subjclass[2010]{Primary 20D40; Secondary 05C25, 05C40, 05C75.}
\maketitle

\begin{abstract}
We introduce the factorization graph of a finite group and study its connectedness and forbidden structures. We characterize all finite groups with connected factorization graphs and classify those with connected bipartite factorization graphs. Also, we obtain a classification of all groups with claw-free, $K_{1,4}$-free, and square-free factorization graphs, excluding the non-solvable groups in the latter case.
\end{abstract}
%=======================================================
\section{Introduction}
A group $G$ is \textit{factorized} if it is the product of two subgroups $A$ and $B$, namely $G=AB$, and this factorization is \textit{proper} if both $A$ and $B$ are proper subgroups of $G$. The group $G$ is called \textit{factorizable} if it has a proper factorization, and we say that it \textit{factorizes on a proper subgroup $A$} if $G=AB$ for some proper subgroup $B$ of $G$. Factorizations of groups appear naturally in many contexts, for instance products of groups, Frattini argument etc. The structure of subgroups in a factorization can influence the structure of the whole group. For example, a celebrated theorem of Ito \cite{ni} states that a group that is a product of two abelian groups is metabelian. Also, Kegel and Wielandt show that a finite group that is a product of two nilpotent subgroups is itself solvable, see \cite{ohk:1961,hw}.

The notion of factorization is generalized in various ways. One is the triple factorization, where a group $G$ is said to have a \textit{triple factorization} if $G=AB=BC=CA$ for some subgroups $A$, $B$, and $C$ of $G$. When conditions are applied to subgroups $A, B$, and $C$ in a triple factorization for $G$, results similar to those mentioned above are obtained; for example see \cite{ohk:1965}. 

The aim of this paper is to extend the above definitions and investigate factorizations from a combinatorial point of view by assigning to a group its graph of factorizations. The \textit{factorization graph} of a finite group $G$, denoted by $\F(G)$, is defined as a graph with vertex set 
\[V(\F(G))=\{H:H<G, H\neq G, H\nsubseteq\Phi(G)\}\]
with two vertices $U$ and $V$ adjacent if $G=UV$. Also, the \textit{proper factorization graph} $\F^*(G)$ of $G$ is the graph obtained from $\F(G)$ by removing all isolated vertices. Recall that  $\Phi(G)$ is the Frattini subgroup of $G$.

One can observe that proper factorizations of $G$ correspond to edges in the graph $\F(G)$. Also, every proper triple factorization of $G$ is equivalent to a triangle in its factorization graph. This suggests that we could consider every (induced) subgraph of $\F(G)$ as a (induced) factorization pattern in $G$. Recall that an \textit{induced subgraph} of a graph is a subgraph having all possible edges among its vertices. Also, by an \textit{induced factorization pattern} among a set of subgroups we mean a factorization pattern whose corresponding subgraph in $\F(G)$ is an induced subgraph of $\F(G)$.

Motivated by this, we are interested to see which finite groups cannot have particular (induced) factorization patterns as subgraphs of their factorization graphs. For instance, in Section 3, we shall describe all those factorizable groups having no factorization pattern that corresponds to an induced square, claw, or star graph with four pendants. 

Recall that the complete graph $K_n$ is a graph with $n$ pairwise adjacent vertices, and the complete bipartite graph $K_{m,n}$ is a graph with $m+n$ vertices partitioned into two sets $X$ and $Y$ with $m$ and $n$ elements, respectively, such that every edge has one end in $X$ and the other end in $Y$. The complete bipartite graphs $K_{1,n}$ are called \textit{star graphs}, and the star graph $K_{1,3}$ is known as the \textit{claw}. Also, a \textit{pendant vertex} is a vertex of degree one. The complement $\G^c$ of a graph $\G$ is a graph with the same vertices as $\G$ such that two vertices are adjacent in $\G^c$ if they are non-adjacent in $\G$. Let $\G_1,\ldots,\G_n$ be $n$ graphs, which we may assume to have disjoint vertex sets. The disjoint union $\G_1\cup\cdots\cup\G_n$ of graphs $\G_1,\ldots,\G_n$ is the graph whose vertex set and edge set is the union of vertex sets and edges sets of $\G_1,\ldots,\G_n$, respectively. If $\G_1=\cdots=\G_n$, then we write $n\G_1$ for their disjoint union (see \cite{dw}).

Let us call a group theoretical property is true on all non-superfluous subgroups of a finite group $G$ if it holds for all subgroups of $G$ not contained in the Frattini subgroup of $G$. Having this in mind, we begin our study in Section 2 by looking at a more restrictive class of groups, namely those finite groups $G$ factorizing on all non-superfluous proper subgroups. This enables us to give a classification of all finite groups with a connected factorization graph. Among these groups, we also classify those groups with no odd cycle as a factorization pattern, that is, groups whose factorization graphs are bipartite.

Throughout this paper, all groups are assumed to be finite. Given a group $G$, the minimum number of generators of $G$ is denoted by $d(G)$. Also, $\bar{\ }:G\lra G/\Phi(G)$ denotes the natural homomorphism. It is evident that $H\Phi(G)$ is a vertex of $\F(G)$ containing $\Phi(G)$ for every vertex $H$ of $\F(G)$, and that every neighbor of $H$ in $\F(G)$ is adjacent to $H\Phi(G)$ as well. In addition, $\F(G/\Phi(G))$ is isomorphic to an induced subgraph of $\F(G)$. We shall uses these facts without further references. All group-theoretical notation are standard and follow that of \cite{djsr}.
%=======================================================
\section{Connectedness}
In this section, we shall consider those finite groups which factorize on all non-superfluous subgroups or equivalently those finite groups whose factorization graphs have no isolated vertices. Recall that a group $G$ is \textit{polycyclic} if it has a subnormal series all of whose factors are cyclic. Note that, in the case of finite groups, the class of polycyclic groups coincides with the class of solvable groups. We use the following two results in order to give a precise description of the groups under consideration.
%--------------------------------------------------
\begin{proposition}\label{isolated-vertex-free-is-polycyclic}
Let $G$ be a finite group. If $\F(G)$ has no isolated vertices, then $G$ is solvable.
\end{proposition}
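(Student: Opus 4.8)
The plan is to prove the contrapositive by producing an isolated vertex of the cheapest possible kind, a subgroup of order $2$, and it is convenient to reduce first to the case $\Phi(G)=1$. So suppose $\F(G)$ has no isolated vertices; I would first check that neither does $\F(G/\Phi(G))$. The vertices of $\F(G)$ containing $\Phi(G)$, with the induced adjacencies, form a copy of $\F(G/\Phi(G))$; given such a vertex $H$ and a neighbour $K$ of it in $\F(G)$, the subgroup $K\Phi(G)$ is again a vertex of $\F(G)$ (it is proper, since $K$ lies in a maximal subgroup and that subgroup also contains $\Phi(G)$), it contains $\Phi(G)$, it differs from $H$, and $G=H(K\Phi(G))$ --- so $H$ already has a neighbour in the copy of $\F(G/\Phi(G))$. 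Since $\Phi(G)$ is nilpotent, $G/\Phi(G)$ is non-solvable whenever $G$ is, so it suffices to derive a contradiction under the extra assumption $\Phi(G)=1$; then every proper nontrivial subgroup of $G$ is a vertex of $\F(G)$.

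The key elementary observation is that if $t\in G$ is an involution and $G=\gen t B$ with $B<G$, then $\gen t\cap B=1$ --- otherwise $t\in B$ and $B=G$ --- so $|G:B|=2$, whence $B\normal G$ and $[G,G]\le B$. Thus the neighbours of $\gen t$ in $\F(G)$ are exactly the index-$2$ subgroups that avoid $t$; in particular, if $t\in[G,G]$ then $\gen t$ is isolated as soon as it is a vertex, i.e. as soon as $\gen t\not\le\Phi(G)$.

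It remains to place an involution inside $[G,G]$. Since the derived series of $G$ and of $[G,G]$ coincide up to a shift, $[G,G]$ is non-solvable because $G$ is, so $|[G,G]|$ is even by the Feit--Thompson theorem, and $[G,G]$ contains an involution $t$. With $\Phi(G)=1$ and $G\ne C_2$, the subgroup $\gen t$ is a vertex of $\F(G)$, hence --- by the previous paragraph and $t\in[G,G]$ --- an isolated one, contradicting our hypothesis.

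I expect the only delicate point to be the Frattini reduction: $\F(G/\Phi(G))$ is merely an \emph{induced} subgraph of $\F(G)$ and so could a priori acquire isolated vertices, which must be excluded via the explicit $K\mapsto K\Phi(G)$ device rather than by any general graph-theoretic principle. Everything else is routine, the sole external input being the odd-order theorem, invoked only to locate an involution in $[G,G]$.
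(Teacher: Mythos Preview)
Your argument is correct. The Frattini reduction is carried out carefully (and yes, the $K\mapsto K\Phi(G)$ step is exactly what is needed to see that the induced copy of $\F(G/\Phi(G))$ inherits the no-isolated-vertex property), and once $\Phi(G)=1$ the involution trick works: any neighbour of $\gen t$ must have index $2$ and hence contain $[G,G]\ni t$, a contradiction.

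However, your route is quite different from the paper's, and the trade-off is worth noting. You invoke the Feit--Thompson odd-order theorem solely to place an involution inside $[G,G]$; this is an enormous external input for what is otherwise a two-line argument. The paper instead gives a direct, entirely elementary construction: writing $|G/\Phi(G)|=p_1\cdots p_n$ with $p_1\leqslant\cdots\leqslant p_n$, it builds a subnormal chain $G=K_0\rhd K_1\rhd\cdots\rhd K_n=\Phi(G)$ with $[K_{i-1}:K_i]=p_i$ by repeatedly choosing a subgroup $H_i\leqslant K_{i-1}$ of order $p_i$, using the hypothesis to factor $G=H_iK_i^*$, and setting $K_i=K_i^*\cap K_{i-1}$; normality of $K_i$ in $K_{i-1}$ comes from the smallest-prime-index criterion. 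So the paper obtains an explicit polycyclic series (and, as a by-product, the maximal subgroups $K_i^*$ used later in Theorem~\ref{connectedness}) with no appeal to deep classification-type results. Your proof is shorter but leans on a vastly stronger theorem; the paper's is self-contained and yields more structural information.
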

\begin{proof}
Clearly, we may assume that $\Phi(G)=1$. Let $|G|=p_1\cdots p_n$ with $1=p_0<p_1\leqslant\cdots\leqslant p_n$. Let $H_0=1$ and $K_0=G$. Suppose $H_j$ and $K_j$ are defined for all $0\leqslant j\leqslant i<n$ satisfying $|H_j|=p_j$, $|K_j|=p_{j+1}\cdots p_n$, $H_j\leqslant K_{j-1}$ and $[K_{j-1}:K_j]=p_j$ if $j\geq1$. Let $H_{i+1}$ be a subgroup of $K_i$ of order $p_{i+1}$. Since $H_{i+1}$ is not an isolated vertex, it follows that $G=H_{i+1}K^*_{i+1}$ for some proper subgroup $K^*_{i+1}$. Let $K_{i+1}=K^*_{i+1}\cap K_i$. Then $K_i=H_{i+1}K_{i+1}$ so that $K_{i+1}\normal K_i$ and $[K_i:K_{i+1}]=p_{i+1}$. Therefore, $G$ has a subnormal series
\[1=K_n\normal K_{n-1}\normal\cdots\normal K_1\normal K_0=G\]
with cyclic factors, as required.
\end{proof}

Notice that the above proposition extends a result of Hall \cite{ph} on complemented groups. Now, we can state and prove our classification of all finite groups with a connected factorization graph.
%--------------------------------------------------
\begin{theorem}\label{connectedness}
Let $G$ be a finite group. Then the following statements are equivalent:
\begin{itemize}
\item[(1)]$\F(G)$ has no isolated vertices; 
\item[(2)]$\F(G)$ is connected;
\item[(3)]$\Phi(G)$ is the intersection of $m$ maximal subgroups of $G$ whose indexes in $G$ are the lowest $m$ prime divisors of $|G/\Phi(G)|$ for some $m$.
\end{itemize}
\end{theorem}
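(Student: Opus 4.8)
The plan is to reduce to the case $\Phi(G)=1$ and then prove the cycle $(2)\Rightarrow(1)\Rightarrow(3)\Rightarrow(2)$, in which only $(1)\Rightarrow(3)$ is substantial. First I would record the reduction: because $\Phi(G)\normal G$ and $H\Phi(G)<G$ for every proper $H$, one verifies that $\F(G)$ has no isolated vertex iff $\F(G/\Phi(G))$ has none, that $\F(G)$ is connected iff $\F(G/\Phi(G))$ is connected and $\F(G)$ has no isolated vertex, and that (3) passes verbatim between $G$ and $G/\Phi(G)$ (maximal subgroups over $\Phi(G)$ correspond to maximal subgroups of the quotient, with the same indices). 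So assume $\Phi(G)=1$ and write $|G|=p_1\cdots p_n$ with $p_1\le\cdots\le p_n$. Then (3) says precisely that $G$ has maximal subgroups $M_1,\dots,M_n$ with $[G:M_i]=p_i$ and $\bigcap_iM_i=1$: from $[G:A\cap B]\le[G:A][G:B]$ one gets $m=n$ and, for each $k$, the ``independence'' $[G:\bigcap_{i\le k}M_i]=p_1\cdots p_k$, i.e. $\parn{\bigcap_{i<k}M_i}M_k=G$. Setting $N_k=\bigcap_{i\le k}M_i$ (so $N_0=G$, $N_n=1$), independence forces $[N_{k-1}:N_k]=p_k$, and as $p_k$ is the least prime dividing $|N_{k-1}|=p_k\cdots p_n$ we get $N_k\normal N_{k-1}$ with $N_{k-1}/N_k\cong\Z_{p_k}$; this chain is the key object.

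The implication $(2)\Rightarrow(1)$ is immediate, since $\F(G)$ is empty or has at least two vertices, so it never consists of a lone isolated vertex. For $(3)\Rightarrow(2)$ I would argue as follows. Since $M_1\normal G$ (index the least prime), $UM_1=G$ for every $U\not\le M_1$, and $M_1M_i\supseteq N_{i-1}M_i=G$; hence $M_1,\dots,M_n$ together with every vertex not contained in $M_1$ lie in one component $C$. Given an arbitrary vertex $V$, choose $j$ minimal with $V\not\le M_j$ (possible as $V\ne1=\bigcap_iM_i$). Then $V\le N_{j-1}$ and $V\not\le N_j$, so $VN_j/N_j$ is a non-trivial subgroup of $N_{j-1}/N_j\cong\Z_{p_j}$ and therefore $VN_j=N_{j-1}$; since $N_j\le M_j$ and $N_{j-1}M_j=G$, associativity of the set product gives $VM_j=V(N_jM_j)=(VN_j)M_j=N_{j-1}M_j=G$. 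Thus $V$ is adjacent to $M_j\in C$, so $V\in C$; hence $\F(G)$ is connected, and in particular has no isolated vertex.

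For $(1)\Rightarrow(2)$ (which I would also use to start the induction): by Proposition~\ref{isolated-vertex-free-is-polycyclic}, $G$ is solvable; a subgroup $H$ of order $p_1$ is not isolated, so $G=HK$ with $K$ proper, and since $[G:K]\le p_1$ divides $|G|$ we must have $[G:K]=p_1$, so $M_1:=K$ is a normal maximal subgroup of index $p_1$. If $V$ is a vertex with $V\le M_1$, then by (1) it has a neighbour $W$, which cannot lie in $M_1$ (else $VW\le M_1<G$), so $WM_1=G$ and $W$ is adjacent to $M_1$; since every vertex not inside $M_1$ is already adjacent to $M_1$, $\F(G)$ is connected. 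Then for $(1)\Rightarrow(3)$ I would induct on $|G|$: with $M_1$ as above, $\Phi(M_1)\le\Phi(G)=1$ because $M_1\normal G$, and $\F(M_1)$ has no isolated vertex — if $1\ne V\lneq M_1$ and $G=VW$ with $W$ proper, the modular law gives $V(W\cap M_1)=VW\cap M_1=M_1$, and $W\cap M_1$ is a proper (indeed $[M_1:W\cap M_1]=[G:W]>1$) and non-trivial (else $V=M_1$) subgroup, hence a vertex of $\F(M_1)$ adjacent to $V$. By induction $M_1$ satisfies (3), so it has maximal subgroups $\tilde M_2,\dots,\tilde M_n$ of indices $p_2,\dots,p_n$ with $\bigcap_i\tilde M_i=1$.

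The \emph{main obstacle} is then to promote $\tilde M_2,\dots,\tilde M_n$ to maximal subgroups $M_2,\dots,M_n$ of $G$ with $[G:M_i]=p_i$ and $M_1\cap\bigcap_{i\ge2}M_i=1$ — a maximal subgroup of the normal subgroup $M_1$ need not lie inside a subgroup of $G$ of the corresponding prime index, so this cannot be done naively. I would attack it by a normalizer/conjugacy analysis: when $\tilde M_i\nnormal M_1$ it is self-normalizing in $M_1$, so $N_G(\tilde M_i)\cap M_1=\tilde M_i$ and $[N_G(\tilde M_i):\tilde M_i]\in\{1,p_1\}$, the value $p_1$ (which makes $M_i:=N_G(\tilde M_i)$ a maximal subgroup of index $p_i$ meeting $M_1$ in $\tilde M_i$) occurring exactly when some element outside $M_1$ normalizes $\tilde M_i$; so the point is to choose the maximal subgroups of $M_1$ supplied by the induction compatibly with the $G$-action permuting the index-$p_i$ maximal subgroups of $M_1$, using that $G/M_1\cong\Z_{p_1}$. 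The case $\tilde M_i\normal M_1$ (which is forced when $p_i=p_1$) is easier: one extends $\tilde M_i$ to an index-$p_i$ subgroup of $G$ using Cauchy's theorem in $G/\mathrm{core}_G(\tilde M_i)$. Settling this lifting step is where I expect essentially all the difficulty of the theorem to lie; once it is done, the cycle closes.
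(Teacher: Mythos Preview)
Your reduction to $\Phi(G)=1$ and your proofs of $(2)\Rightarrow(1)$, $(3)\Rightarrow(2)$, and the direct $(1)\Rightarrow(2)$ are correct; your $(3)\Rightarrow(2)$ argument is essentially the paper's, packaged via the chain $N_k=\bigcap_{i\le k}M_i$.

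The genuine gap is in $(1)\Rightarrow(3)$. Your inductive scheme produces maximal subgroups $\tilde M_2,\dots,\tilde M_n$ of $M_1$, and you then need index-$p_i$ maximal subgroups $M_i$ of $G$ with $M_i\cap M_1=\tilde M_i$; but your normalizer idea need not supply them. For a concrete obstruction, take $G=D_{2p}\times C_p$ ($p$ odd), so $M_1=C_p\times C_p$. The only order-$p$ subgroups of $M_1$ normalized by an involution of $G$ are $C_p\times 1$ and $1\times C_p$; if the induction hands you a diagonal $\tilde M_i$, then $N_G(\tilde M_i)=M_1$ and no index-$p$ subgroup of $G$ meets $M_1$ in $\tilde M_i$. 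Your remark that one should ``choose the $\tilde M_i$ compatibly with the $G$-action'' is exactly the missing ingredient, but the induction hypothesis asserts only existence, not $G$-compatibility, so as stated the induction does not close.

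The paper avoids this entirely and shows that the step you call the ``main obstacle'' is in fact empty. It simply reuses the construction in Proposition~\ref{isolated-vertex-free-is-polycyclic}: writing $|G/\Phi(G)|=p_1\cdots p_n$ with $p_1\le\cdots\le p_n$, that proof already produces proper subgroups $K_i^*$ of $G$ with $G=H_iK_i^*$, $|H_i|=p_i$, hence $[G:K_i^*]=p_i$, and with $K_1^*\cap\cdots\cap K_i^*=K_i$, so $K_1^*\cap\cdots\cap K_n^*=K_n=\Phi(G)$. Setting $M_i:=K_i^*$ gives $(3)$ in one line, with no induction and no lifting. In other words, the same chain you used for solvability already carries the data for $(3)$; you invoked Proposition~\ref{isolated-vertex-free-is-polycyclic} only to extract solvability, whereas the paper extracts the $M_i$ themselves.
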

\begin{proof}
Let $G$ be a finite group and $\ol{G}:=G/\Phi(G)$. Also, let $|\ol{G}|=p_1\cdots p_n$, where $p_1\leqslant\cdots\leqslant p_n$ are primes.

It is evident that (2) implies (1). Also, if $\F(G)$ has no isolated vertices, then the subgroups $M_i:=K_i^*$ ($i=1,\ldots,n$) as in the proof of Proposition \ref{isolated-vertex-free-is-polycyclic} satisfy $[G:M_i]=p_i$ ($i=1,\ldots,n$) and $M_1\cap\cdots\cap M_n=\Phi(G)$, that is, (1) implies (3). Hence, we need only show that (3) implies (2). So, assume (3) holds. Then there exist maximal subgroups $M_1,\ldots,M_m$ of $G$ such that $M_1\cap\cdots\cap M_m=\Phi(G)$ and $[G:M_i]=p_i$, for $i=1,\ldots,m$. 

We show that 
\begin{itemize}
\item[(i)]$[M_{1^\pi}\cap\cdots\cap M_{i^\pi}:M_{1^\pi}\cap\cdots\cap M_{i^\pi}\cap M_{j^\pi}]=p_{j^\pi}$ for all $1\leq i<j\leqslant m$ and permutations $\pi\in S_m$,
\item[(ii)]$M_1\cap\cdots\cap M_i$ is a group of order $p_{i+1}\cdots p_m$, and
\item[(iii)]$M_1\cap\cdots\cap M_i\cap M_{i+1}$ is a maximal normal subgroup of $M_1\cap\cdots\cap M_i$,
\end{itemize}
for all $i=1,\ldots,m-1$. 

Let $\pi\in S_m$ be a permutation. Since $[H:H\cap K]\leqslant[G:K]$ for any two subgroups $H$ and $K$ of $G$, we get
\begin{align*}
|\ol{G}|&=[G:M_{1^\pi}][M_{1^\pi}:M_{1^\pi}\cap M_{2^\pi}]\cdots[M_{1^\pi}\cap\cdots\cap M_{(m-1)^\pi}:M_{1^\pi}\cap\cdots\cap M_{m^\pi}]\\
&\leqslant [G:M_{1^\pi}][G:M_{2^\pi}]\cdots[G:M_{m^\pi}]=p_1\cdots p_m,
\end{align*}
from which it follows that $m=n$ and 
\[[M_{1^\pi}\cap\cdots\cap M_{i^\pi}:M_{1^\pi}\cap\cdots\cap M_{i^\pi}\cap M_{(i+1)^\pi}]=p_{(i+1)^\pi}\]
for all $1\leqslant i<m$. This yields (i), and subsequently (ii). Also, since $M_1\cap\cdots\cap M_{i+1}$ is a subgroup of $M_1\cap\cdots\cap M_i$ of index $p_{i+1}$, the lowerst prime dividing $|M_1\cap\cdots\cap M_i|$, it follows that $M_1\cap\cdots\cap M_{i+1}$ is a normal subgroup of $M_1\cap\cdots\cap M_i$ proving (iii).

Now, let $H$ be a proper subgroup of $G$ not contained in $\Phi(G)$. We show that $H$ is adjacent to at least one of the maximal subgroups $M_1,\ldots,M_m$, from which the result will follow. 

If $H\nsubseteq M_1$, then obviously $H$ is adjacent to $M_1$. So, assume that $H\subseteq M_1$. Let $i$ be such that $H\subseteq M_1,\ldots,M_i$ and $H\nsubseteq M_{i+1}$. Then $H\nsubseteq M_1\cap\cdots\cap M_i\cap M_{i+1}$, which implies that 
\[M_1\cap\cdots\cap M_i=H(M_1\cap\cdots\cap M_i\cap M_{i+1})\]
for $M_1\cap\cdots\cap M_i\cap M_{i+1}$ is a maximal normal subgroup of $M_1\cap\cdots\cap M_i$ by (iii).

Suppose we have shown that $M_1\cap\cdots\cap M_j=H(M_1\cap\cdots\cap M_j\cap M_{i+1})$ for some $2\leqslant j\leqslant i$. Clearly, $M_1\cap\cdots\cap M_{j-1}\cap M_{i+1}\nsubseteq M_1\cap\cdots\cap M_j$ for otherwise $M_1\cap\cdots\cap M_j\cap M_{i+1}=M_1\cap\cdots\cap M_{j-1}\cap M_{i+1}$ contradicting (i). Now, since $M_1\cap\cdots\cap M_j$ is a maximal normal subgroup of $M_1\cap\cdots\cap M_{j-1}$ by (iii), it follows that 
\begin{align*}
M_1\cap\cdots\cap M_{j-1}&=(M_1\cap\cdots\cap M_j)(M_1\cap\cdots\cap M_{j-1}\cap M_{i+1})\\
&=H(M_1\cap\cdots\cap M_j\cap M_{i+1})(M_1\cap\cdots\cap M_{j-1}\cap M_{i+1})\\
&=H(M_1\cap\cdots\cap M_{j-1}\cap M_{i+1}).
\end{align*}
Hence, an inductive argument shows that $M_1=H(M_1\cap M_{i+1})$. But then 
\[G=M_1M_{i+1}=H(M_1\cap M_{i+1})M_{i+1}=HM_{i+1}\]
so that $H$ is adjacent to $M_{i+1}$, as required.
\end{proof}

We conclude this section by giving a complete classification of all finite groups with a bipartite factorization graph having no isolated vertex. Note that, by Theorem \ref{connectedness}, the corresponding factorization graphs are always connected. Indeed, as we shall see, the groups under investigation have complete bipartite factorization graphs. 
%--------------------------------------------------
\begin{proposition}[\cite{mfdg}]\label{G/Phi(G)=metacyclic}
Let $G$ be a finite group such that $G/\Phi(G)$ is a metacyclic group of the form
$\gen{a,b:a^m=b^n=1,a^b=a^r}$ with $\gcd(m,n)=1$. Then $m$ is square-free and 
\[G=\gen{x,y:x^{mm'}=y^{nn'}=1,x^y=x^s},\]
where $m',n'$ divide some power of $m,n$, respectively, $s\equiv r\pmod m$ is an integer for which $n$ is the least multiple of $n^*$ satisfying $s^n\equiv1\pmod m$, and $n^*$ is the product of all prime divisors of $n$.
\end{proposition}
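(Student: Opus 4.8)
The plan is to show first that $m$ is square-free, then that \emph{every} Sylow subgroup of $G$ is cyclic, and finally to read off the presentation from Zassenhaus's structure theorem for such groups by comparing with $\ol G:=G/\Phi(G)$. For the first point, $\gen a\cong\Z_m$ is normal in $\ol G$ (as $a^b=a^r\in\gen a$), and $\Phi(H)\leqslant\Phi(K)$ whenever $H\normal K$ is finite: if a maximal subgroup $M<K$ missed $\Phi(H)$ then $\Phi(H)M=K$, hence $H=\Phi(H)(H\cap M)$ and so $H=H\cap M\leqslant M$, a contradiction. Since $\Phi(\ol G)=1$ this forces $\Phi(\Z_m)=1$, i.e. $m$ is square-free. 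The same non-generator principle gives $\pi(\Phi(G))\subseteq\pi(\ol G)$ (writing $\pi(\cdot)$ for the set of prime divisors): a prime dividing $|\Phi(G)|$ but not $|\ol G|$ would force a full Sylow subgroup of $G$ inside $\Phi(G)$, which would then be normal — characteristic in the nilpotent group $\Phi(G)$ — hence complemented by Schur--Zassenhaus, impossible for a nontrivial subgroup of $\Phi(G)$.

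Next, the hypothesis makes every Sylow subgroup of $\ol G$ cyclic: for $p\mid m$ such a subgroup lies in $\gen a\cong\Z_m$, and for $p\mid n$ in a conjugate of the Hall $\pi(n)$-subgroup $\gen b\cong\Z_n$. I claim $G$ inherits this. Since $\ol G$ and $\Phi(G)$ are solvable, so is $G$; pulling back $\gen a$ to its preimage $N_0\normal G$ and applying the Frattini argument to a Hall $\pi(m)$-subgroup $N$ of $N_0$ yields $G=N_0N_G(N)=\Phi(G)N_G(N)=N_G(N)$, so $N$ is a normal Hall $\pi(m)$-subgroup of $G$; it is a nilpotent $\pi(m)$-group, and $G=N\rtimes H$ for a $\pi(m)'$-subgroup $H$ by Schur--Zassenhaus. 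For $p\mid m$, a Sylow $p$-subgroup of $G$ is the (normal) Sylow $p$-subgroup $N_p$ of $N$, so $G$ is $p$-closed; granting the lemma below, $N_p$ then has cyclic Frattini quotient — a quotient of $N_p/(\Phi(G)\cap N_p)$, which embeds in $\ol G$ — hence is cyclic. Meanwhile $H\cong G/N$ satisfies $\Phi(G)N/N\leqslant\Phi(G/N)$ and $(G/N)\big/(\Phi(G)N/N)\cong\ol G/\gen a\cong\Z_n$, so $G/N$ has cyclic Frattini quotient and is cyclic. Thus every Sylow subgroup of $G$ is cyclic.

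The step I expect to be the main obstacle is the lemma: \emph{if $P=O_p(G)$ is a Sylow $p$-subgroup of $G$, then $\Phi(G)\cap P\leqslant\Phi(P)$.} I would argue by contradiction. If it fails, then $(\Phi(G)\cap P)\Phi(P)/\Phi(P)$ is a nonzero submodule of $P/\Phi(P)$; choose a minimal nonzero $G$-submodule $W/\Phi(P)$ inside it, so $\Phi(P)\leqslant W\leqslant\Phi(G)$ with $W\neq\Phi(P)$ (using $\Phi(P)\leqslant\Phi(G)$) and $W\normal G$. In $G/\Phi(P)$ the Sylow $p$-subgroup $P/\Phi(P)$ is normal with a $p'$-complement (Schur--Zassenhaus, since $G$ is $p$-closed), so by Maschke's theorem $W/\Phi(P)$ splits off $P/\Phi(P)$ as $\mathbb{F}_p$-modules; lifting the complement back gives a proper subgroup $L<G$ with $LW=G$ and $W\nsubseteq L$. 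But $W\leqslant\Phi(G)$ lies in every maximal subgroup of $G$ containing $L$, forcing such a subgroup to equal $LW=G$ — a contradiction.

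Finally, since every Sylow subgroup of $G$ is cyclic, Zassenhaus's theorem gives $G=\gen{x,y:x^{M}=y^{\nu}=1,\,x^y=x^{R}}$ with $\gcd(M,\nu)=1$, and one may take $\gen x=N$, so $M=|N|$ and $\nu=|H|$. Setting $m'=|\Phi(G)|_{\pi(m)}$ and $n'=|\Phi(G)|_{\pi(m)'}$ gives $M=mm'$ and $\nu=nn'$; by $\pi(\Phi(G))\subseteq\pi(\ol G)$ together with $\gcd(m,n)=1$, the primes of $m'$ lie in $\pi(m)$ and those of $n'$ in $\pi(n)$, so $m'$ and $n'$ divide suitable powers of $m$ and $n$. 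Reducing the presentation of $G$ modulo $\Phi(G)$ identifies $\gen x\Phi(G)/\Phi(G)$ with $\gen a$ and lets one take the image of $y$ to be $b$, so $x^y=x^R$ becomes $a^b=a^R$ and hence $s:=R\equiv r\pmod m$; and since $b^n=1$ forces $r^n\equiv1\pmod m$ while $\Phi(\ol G)=1$ prevents $\gen b$ from carrying $\pi(n)$-parts beyond those required by the order of its action on $\gen a$, the integer $n$ is precisely the least multiple of $n^{*}=\mathrm{rad}(n)$ with $s^{n}\equiv1\pmod m$. Everything outside the complementation lemma of the third paragraph is bookkeeping with Frattini arguments and prime sets.
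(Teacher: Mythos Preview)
The paper does not actually prove this proposition; it is imported from the reference \cite{mfdg}, listed there as a manuscript \emph{in preparation}, and no argument is given in the present paper. So there is nothing to compare your proof against, and your attempt must stand on its own.

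Your strategy --- show that every Sylow subgroup of $G$ is cyclic and then read off the presentation from the H\"older--Burnside--Zassenhaus classification --- is the natural one, and most of the pieces are in order: the square-freeness of $m$ via $\Phi(H)\leqslant\Phi(K)$ for $H\normal K$, the inclusion $\pi(\Phi(G))\subseteq\pi(\ol G)$, and especially your key lemma that $\Phi(G)\cap P\leqslant\Phi(P)$ when $P=O_p(G)$ is a full Sylow $p$-subgroup, whose Maschke-based proof is correct. Two points need tightening. First, you assert that the normal Hall $\pi(m)$-subgroup $N$ is nilpotent, but the Frattini argument you ran only yields $N\normal G$; to get nilpotency you must repeat the same preimage-plus-Frattini step for each prime $p\mid m$ separately (pulling back the normal subgroup $\gen{a^{m/p}}$ of $\ol G$), after which $G$ is $p$-closed for every such $p$ and $N$ becomes the direct product of the resulting normal Sylow subgroups. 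Second, the last clause --- that $n$ is the least multiple of $n^*=\mathrm{rad}(n)$ with $s^n\equiv1\pmod m$ --- is only gestured at. Since $s\equiv r\pmod m$, this is a statement about $r$, and the reason it holds is that if $d=\mathrm{ord}_m(r)$ and $\ell:=\mathrm{lcm}(n^*,d)<n$, then $b^{\ell}$ is a nontrivial central element of $\ol G$ which lies in $\gen{b^q}$ for every prime $q\mid n$ (because $n^*\mid\ell$, so in particular $q^2\mid n$ for the relevant $q$) and, being central, in every Sylow $\pi(n)$-subgroup; one then checks it lies in every maximal subgroup of $\ol G$, contradicting $\Phi(\ol G)=1$. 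With these two gaps filled your argument goes through.
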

%--------------------------------------------------
\begin{theorem}\label{bipartite}
Let $G$ be a finite group. Then $\F(G)$ is a bipartite graph with no isolated vertex if and only if $G$ is isomorphic to one of the following groups:
\begin{itemize}
\item[(1)]$C_{p^m}$, $C_{p^mq^n}$; or
\item[(2)]$\gen{x,y:x^{p^m}=y^{q^n}=1,x^y=x^\lambda}$ , 
\end{itemize}
where $p,q$ are distinct primes and $\ord_{p^m}\lambda=q$.
\end{theorem}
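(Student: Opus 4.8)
The plan is to prove the two implications separately, using Theorem~\ref{connectedness} to locate the relevant maximal subgroups in the forward direction and Proposition~\ref{G/Phi(G)=metacyclic} to pin down the non-abelian examples. Assume first that $\F(G)$ is bipartite with no isolated vertex. By Theorem~\ref{connectedness}, condition~(3) there holds: there are maximal subgroups $M_1,\dots,M_m$ of $G$ with $M_1\cap\cdots\cap M_m=\Phi(G)$ and $[G:M_i]=p_i$, where $p_1\le\cdots\le p_m$ are the prime divisors of $|G/\Phi(G)|$ counted with multiplicity, and $m$ is the total number of such prime factors. The index bookkeeping in the proof of Theorem~\ref{connectedness} (its property~(i)) gives $[M_i:M_i\cap M_j]=p_j$, hence $[G:M_i\cap M_j]=p_ip_j$ and $|M_iM_j|=|M_i||M_j|/|M_i\cap M_j|=|G|$, so $G=M_iM_j$ for all $i\ne j$. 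Therefore $\{M_1,\dots,M_m\}$ spans a complete subgraph of $\F(G)$; as bipartite graphs contain no triangle, $m\le 2$, i.e. $|G/\Phi(G)|\in\{p,\,p^2,\,pq\}$ with $p\ne q$ primes.

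If $|G/\Phi(G)|=p$, then $G/\Phi(G)\cong C_p$ is cyclic, so $G$ is cyclic by the non-generator property of $\Phi(G)$; since the Frattini subgroup of a cyclic group has squarefree index equal to the product of the distinct primes dividing the order, $|G|$ is a power of $p$ and $G\cong C_{p^k}$, which is case~(1). If $|G/\Phi(G)|=p^2$, then $G/\Phi(G)$ cannot be cyclic (that would make $[G:\Phi(G)]$ squarefree), so $G/\Phi(G)\cong C_p\times C_p$; its $p+1\ge 3$ subgroups of index $p$ pull back to maximal subgroups of $G$ whose pairwise intersections equal $\Phi(G)$, hence which pairwise multiply to $G$, producing a triangle in $\F(G)$ --- a contradiction, so this case is vacuous. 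If $|G/\Phi(G)|=pq$ with $p\ne q$ and $G/\Phi(G)$ is cyclic, then $G$ is cyclic and $G\cong C_{p^aq^b}$, again case~(1). Otherwise $G/\Phi(G)$ is the non-abelian group of order $pq$, which is metacyclic of the form $\gen{a,b:a^P=b^Q=1,a^b=a^\rho}$ with $\{P,Q\}=\{p,q\}$, $\gcd(P,Q)=1$ and $\ord_P\rho=Q$. Proposition~\ref{G/Phi(G)=metacyclic} then forces $G\cong\gen{x,y:x^{P^a}=y^{Q^b}=1,x^y=x^s}$ for some $a,b\ge 1$, with $s\equiv\rho\pmod P$ and $\ord_{P^a}s\mid Q$; since $s\equiv\rho\pmod P$ and $\ord_P\rho=Q$, this forces $\ord_{P^a}s=Q$, which is case~(2).

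For the converse one computes $\F(G)$ directly for the three families. When $G\cong C_{p^m}$, every proper subgroup lies in $\Phi(G)=C_{p^{m-1}}$, so $\F(G)$ has no vertices and is vacuously bipartite without isolated vertices. When $G\cong C_{p^m}\times C_{q^n}$, the vertices are precisely the $n$ subgroups $C_{p^m}\times C_{q^j}$ $(0\le j<n)$ and the $m$ subgroups $C_{p^i}\times C_{q^n}$ $(0\le i<m)$, and a subgroup of the first type is adjacent to every subgroup of the second type and to no other vertex, so $\F(G)\cong K_{n,m}$. When $G$ is as in~(2), the hypothesis $p\ne q$ forces $q\mid p-1$ and $\lambda\not\equiv1\pmod p$, so $G\cong C_{p^m}\rtimes C_{q^n}$ with $\Phi(G)=\gen{x^p,y^q}$ and $G/\Phi(G)$ non-abelian of order $pq$; using Schur--Zassenhaus and the identity $(x^cy)^q=y^q$, which holds because $1+\lambda^{-1}+\cdots+\lambda^{-(q-1)}\equiv0\pmod{p^m}$, the vertices of $\F(G)$ turn out to be the $n$ subgroups containing $\gen{x}$ and the subgroups $\gen{x^{p^i},x^cy}$ with full Sylow $q$-part $(1\le i\le m,\ c$ ranging modulo $p^i)$, and a product-equals-$G$ analysis shows that each vertex of the first kind is adjacent to each of the second kind and to no other, so $\F(G)\cong K_{n,\,p+p^2+\cdots+p^m}$, which is bipartite with no isolated vertex.

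The main obstacle is this last computation: parametrising all subgroups of $C_{p^m}\rtimes C_{q^n}$ by their Sylow $p$-part, their Sylow $q$-part, and a conjugacy parameter, and then isolating exactly those lying outside $\Phi(G)$; a lesser point is reading the clean statement $\ord_{p^m}\lambda=q$ off the somewhat technical conclusion of Proposition~\ref{G/Phi(G)=metacyclic}.
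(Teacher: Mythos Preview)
Your proof is correct and takes a genuinely different route from the paper's in the forward direction.

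The paper argues by a case split on $d(G)$. For $d(G)\ge 3$ it produces a vertex lying in both parts of the bipartition from two cyclic subgroups and their join; for $d(G)=1$ it reads off the cyclic cases directly; and for $d(G)=2$ it proves the striking intermediate fact that every nontrivial proper subgroup of $G/\Phi(G)$ is maximal, whence $G/\Phi(G)\cong C_p\rtimes C_q$, and then invokes Proposition~\ref{G/Phi(G)=metacyclic}. Your argument instead recycles Theorem~\ref{connectedness} and the index identity~(i) buried in its proof to see that the distinguished maximal subgroups $M_1,\dots,M_m$ are pairwise adjacent, so bipartiteness forces $m\le 2$ and hence $|G/\Phi(G)|\in\{p,p^2,pq\}$; the $p^2$ case is then killed by the $p+1\ge 3$ pairwise complementary maximal subgroups of $C_p\times C_p$. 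This is shorter and exploits work already done, at the cost of reaching inside the proof of Theorem~\ref{connectedness} rather than just its statement. Both approaches land on Proposition~\ref{G/Phi(G)=metacyclic} for the non-abelian $pq$ case and extract $\ord_{p^m}\lambda=q$ from it in the same way, so your reading of that proposition matches the paper's. For the converse, the paper contents itself with naming the two parts of the bipartition and declaring the verification simple; your explicit parametrisation of the vertices and the identification $\F(G)\cong K_{n,\,p+\cdots+p^m}$ is more informative and in fact yields the subsequent corollary (that the graph is complete bipartite) for free.
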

\begin{proof}
First suppose that $\F(G)$ is a bipartite graph with a bipartition $(\U,\V)$ having no isolated vertex. We have three cases:

Case 1. $d(G)\geq3$. If $x,y\in G\setminus\Phi(G)$ are such that $\gen{x}\in\U$ and $\gen{y}\in\V$, then $\gen{x,y}$ is adjacent to neighbors of $\gen{x}$ and $\gen{y}$, which implies that $\gen{x,y}\in \U\cap\V=\emptyset$, a contradiction. Hence, all cyclic subgroups belong to a single part, say $\U$. On the other hand, if $H$ is a vertex and $x\in H\setminus\Phi(G)$, then any neighbor of $\gen{x}$ is a neighbor of $H$ too. Thus $H\in\U$, which implies that $V(\F(G))=\U$, a contradiction.

Case 2. $d(G)=1$. Let $G=\gen{x_1}\times\cdots\times\gen{x_n}$, where $\gen{x_i}$ are the Sylow $p_i$-subgroups of $G$. Clearly, $\F(G)$ contains a triangle if $n\geq3$. Thus, we get $n\leqslant2$ and $\F(G)$ is either the empty graph with a single vertex if $n=1$, or it consists of two vertices and an edge between them if $n=2$.

Case 3. $d(G)=2$. Clearly, $\F(G)$ is not the null graph since every maximal subgroup $M$ of $G$ satisfies $\Phi(G)\subset M$ and hence is a vertex. First observe that if $H$ and $K$ are two subgroups not contained in $\Phi(G)$, with $H\in \U$ and $K\in\V$, then $H\cap K\subseteq\Phi(G)$; for otherwise $H\cap K$ is a vertex, which implies that every neighbor of $H\cap K$ is a neighbor of $H$ and $K$, a contradiction. Now, let $H,K,L\supset\Phi(G)$ be proper subgroups of $G$ satisfying $H\subseteq K\in\U$, and $L\in\V$ is adjacent to $H$. Clearly, $L$ is adjacent to $K$ so that $HL=KL=G$. Hence, $[H:H\cap L]=[K:K\cap L]$. Since $H\cap L=K\cap L=\Phi(G)$, we observe that $|H|=|K|$, that is, $H=K$. Thus, every subgroup of $\ol{G}:=G/\Phi(G)$ is maximal and a simple argument yields $\ol{G}\cong C_p\rtimes C_q$ for some primes $p$ and $q$. Clearly, $p\neq q$ for otherwise we have a triangle $\{H,K,L\}$ for any three distinct subgroups $\ol{H},\ol{K},\ol{L}$ of $\ol{G}$ of order $p$. Now, by Proposition \ref{G/Phi(G)=metacyclic},
\[G=\gen{x,y:x^{p^m}=y^{q^n}=1,x^y=x^\lambda},\]
where $m,n$ are positive integers and $\ord_{p^m}\lambda=q$.

Conversely, a simple verification shows that if $G$ is any of the groups listed in (1) or (2), then the set of vertices of $\F(G)$ partitions into two sets including subgroups of the forms $\gen{x^{pi}}\rtimes\gen{y}^{x^j}$ and $\gen{x}\rtimes\gen{y^{qk}}$ for some $i,j,k$, respectively, which implies that $\F(G)$ is a complete bipartite graph. The proof is complete.
\end{proof}
%--------------------------------------------------
\begin{corollary}
Let $G$ be a finite group. Then $\F(G)$ is a bipartite graph with no isolated vertices if and only if it is a complete bipartite graph.
\end{corollary}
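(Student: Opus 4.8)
The plan is to derive the corollary directly from Theorem \ref{bipartite} together with the elementary observation already isolated in the body of that theorem's proof. We must show that if $\F(G)$ is bipartite with no isolated vertices, then it is in fact a complete bipartite graph; the reverse implication is trivial since every complete bipartite graph is bipartite and (having at least one edge in each nonempty case, and being a single edge or a larger $K_{m,n}$) has no isolated vertices.

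First I would simply invoke Theorem \ref{bipartite}: under the hypothesis, $G$ is isomorphic to one of $C_{p^m}$, $C_{p^mq^n}$, or $\gen{x,y:x^{p^m}=y^{q^n}=1,x^y=x^\lambda}$ with $\ord_{p^m}\lambda=q$. For the first family $\F(C_{p^m})$ is a single vertex (the empty graph $K_1$, which is $K_{0,1}$ or vacuously complete bipartite), and $\F(C_{p^mq^n})$ is a single edge $K_{1,1}$; both are complete bipartite. For the metacyclic family, the converse direction of the proof of Theorem \ref{bipartite} already records that the vertex set splits into the subgroups of the shape $\gen{x^{p^i}}\rtimes\gen{y}^{x^j}$ on one side and $\gen{x}\rtimes\gen{y^{q^k}}$ on the other, and that every vertex of the first type is adjacent to every vertex of the second type (since the product of such a pair visibly has order $p^m q^n=|G|$), so $\F(G)$ is $K_{m',n'}$ for the appropriate $m',n'$. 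Hence in every case $\F(G)$ is complete bipartite.

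Alternatively, and perhaps more cleanly as a self-contained argument, one can avoid re-deriving the group structure and instead argue graph-theoretically from the key fact established inside the proof of Theorem \ref{bipartite}: if $\F(G)$ is bipartite with parts $\U$ and $\V$, then any $H\in\U$ and $K\in\V$ with $H\cap K\nsubseteq\Phi(G)$ would force $H\cap K$ to be a vertex all of whose neighbours are neighbours of both $H$ and $K$, contradicting bipartiteness; thus $H\cap K\subseteq\Phi(G)$ for all cross pairs. Combined with the structure $\ol G\cong C_p\rtimes C_q$ (so that every nontrivial proper subgroup of $\ol G$ is maximal) and the fact that $[H:H\cap K]$ divides $[G:K]$, one checks that for any $H\in\U$, $K\in\V$ one has $|H|\,|K| = |H\Phi(G)|\,|K\Phi(G)|/|\Phi(G)|$ forcing $HK=G$, i.e.\ $H$ and $K$ are adjacent.

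The only real subtlety — and the one place where care is needed — is the bookkeeping of the boundary cases $C_{p^m}$ (where $\F(G)$ has a single vertex and is ``complete bipartite'' only under the convention that $K_{1,0}$ counts as such) and the verification in the metacyclic case that \emph{every} vertex, not merely the maximal ones, has the claimed form and the claimed adjacency; but this is exactly the computation carried out at the end of the proof of Theorem \ref{bipartite}, so I would present the corollary as an immediate consequence of that theorem and its proof, spelling out only the trivial remark that each listed group indeed has no isolated vertices and that its factorization graph is complete bipartite.
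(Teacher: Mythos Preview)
Your approach is essentially the same as the paper's: the corollary is stated there without proof, as an immediate consequence of Theorem~\ref{bipartite} together with the final paragraph of its proof, which already verifies that for every group on the list the factorization graph is complete bipartite. Your proposal correctly identifies and uses exactly this.

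One small correction: the claim that $\F(C_{p^mq^n})$ is the single edge $K_{1,1}$ is not right in general. Writing $G=\gen{x}\times\gen{y}$ with $|x|=p^m$, $|y|=q^n$, the vertices of $\F(G)$ are the subgroups $\gen{x}\times\gen{y^{q^b}}$ for $1\le b\le n$ and $\gen{x^{p^a}}\times\gen{y}$ for $1\le a\le m$, and one gets $\F(G)\cong K_{m,n}$; the paper's own description in terms of $\gen{x^{p^i}}\rtimes\gen{y}^{x^j}$ and $\gen{x}\rtimes\gen{y^{q^k}}$ already covers this cyclic case uniformly (with trivial action), so you do not need to treat it separately. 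Likewise, $\F(C_{p^m})$ is actually the empty graph on zero vertices (every proper subgroup lies in $\Phi(G)$), not a single vertex; this is still vacuously complete bipartite, so the conclusion is unaffected. Your ``alternative'' argument is not really more self-contained, since it still leans on $\ol G\cong C_p\rtimes C_q$ from inside the proof of Theorem~\ref{bipartite}; I would simply drop it and present the corollary as the one-line deduction you describe first.
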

%=======================================================
\section{Forbidden subgraphs}
This section is devoted to the study of the existence of various subgraphs of factorization graphs. Since there is nothing to mention for a non-factorizable group, all groups under consideration are assumed to have proper factorizations. We note that almost all finite groups are factorizable (see \cite{jk}), so our assumption is not restrictive. In what follows, a graph is said to be \textit{$\G$-free} if it has no induced subgraphs isomorphic to $\G$.
%--------------------------------------------------
\begin{theorem}
Let $G$ be a finite factorizable group. Then $\F(G)$ is $K_{1,4}$-free if and only if $G$ is isomorphic to one of the following groups:
\begin{itemize}
\item[(1)]$C_{pqr}$, $C_p\times C_p$, $Q_8$, $C_4\times C_2$, $C_4\times C_4$;
\item[(2)]$C_{p^mq^n}$ for $1\leq m,n\leq 3$;
\item[(3)]$C_p\times Q_8$ or $C_p\times C_2\times C_2$ for $p>2$; or
\item[(4)]$\gen{x,y:x^{2^n}=y^3=1,y^x=y^{-1},(x^2)^y=x^2}$ for $n\leqslant3$,
\end{itemize}
where $p,q,r$ are distinct primes.
\end{theorem}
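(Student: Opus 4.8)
The plan is to prove the two implications separately, with essentially all of the work in the forward direction.

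\emph{Sufficiency.} For each group $G$ in the list one computes $\F(G)$ directly and checks that no vertex has four pairwise non-adjacent neighbours (equivalently, that every open neighbourhood has independence number at most $3$). Most of these graphs are complete multipartite. For $G=C_{p^mq^n}$ one sees that $UV=G$ exactly when one of $U,V$ has full $p$-part and the other full $q$-part, so $\F(G)\cong K_{m,n}$, which is $K_{1,4}$-free precisely when $m,n\le 3$. For a group $G$ as in (4) the element $x^2$ is central, so every proper subgroup not contained in $\Phi(G)=\langle x^2\rangle$ is either one of the three Sylow $2$-subgroups or a cyclic subgroup containing the unique subgroup of order $3$; a short count gives $\F(G)\cong K_{3,n}$, again $K_{1,4}$-free iff $n\le 3$. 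The graph $\F(C_{pqr})$ is the net (a triangle with a pendant vertex at each corner), of maximum degree $3$. Finally, $C_p\times C_p$, $Q_8$, $C_4\times C_2$, $C_4\times C_4$, $C_p\times Q_8$ and $C_p\times C_2\times C_2$ have very few non-Frattini subgroups and are checked directly; for the two families over an odd prime the identity $(C_p\times A)(C_p\times B)=C_p\times AB$ reduces the adjacency structure to that of $\F(Q_8)\cong\F(C_2\times C_2)\cong K_3$ together with the extra cyclic factors.

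\emph{Necessity.} Assume $G$ is factorizable with $\F(G)$ $K_{1,4}$-free. Since $G/\Phi(G)$ is again factorizable and $\F(G/\Phi(G))$ is an induced subgraph of $\F(G)$, the Frattini-free group $\ol{G}:=G/\Phi(G)$ also has $K_{1,4}$-free factorization graph, so the first task is to classify such $\ol{G}$. One shows first that $\ol{G}$ is solvable: a non-solvable group always contains an induced $K_{1,4}$ (for example a subgroup adjacent to four or more pairwise non-adjacent Sylow subgroups), and I would isolate this as a preliminary lemma. Granting solvability and $\Phi(\ol{G})=1$, the key observation is that a normal subgroup $M$ of $\ol{G}$ of prime index is adjacent in $\F(\ol{G})$ to exactly those vertices not contained in $M$, so the subgraph they induce must have independence number at most $3$; feeding this through the subgroup lattice of the solvable, Frattini-free group $\ol{G}$ (cf.\ Theorem~\ref{connectedness}) forces each prime to contribute at most three subgroups of that order, and pins $\ol{G}$ down to one of $C_p$, $C_{pq}$, $C_{pqr}$, $C_p\times C_p$, the non-abelian group $C_p\rtimes C_q$ (surviving only as $S_3$, since $\F(C_p\rtimes C_q)\cong K_{1,p}$ is $K_{1,4}$-free only for $p\le 3$), or $C_p\times C_2\times C_2$ with $p$ odd.

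It then remains, for each admissible $\ol{G}$, to determine which $G$ with $G/\Phi(G)\cong\ol{G}$ still have $K_{1,4}$-free factorization graph. If $\ol{G}$ is cyclic then so is $G$: this gives $C_{p^mq^n}$ with $m,n\le 3$ when $\ol{G}=C_{pq}$, and only $C_{pqr}$ when $\ol{G}=C_{pqr}$, since already $\F(C_{p^2qr})$ has an induced $K_{1,4}$ centred at the subgroup of index $r$. When $\ol{G}\cong C_p\rtimes C_q$, Proposition~\ref{G/Phi(G)=metacyclic} forces $G=\langle x,y:x^{p^m}=y^{q^n}=1,x^y=x^s\rangle$, and imposing $K_{1,4}$-freeness forces $q=2$, $p=3$, $n=1$, leaving exactly the groups in (4), the bound $m\le 3$ coming from $\F(G)\cong K_{3,m}$. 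In the remaining cases $\ol{G}\cong C_p\times C_p$ (so $G$ is a $2$-generated $p$-group) and $\ol{G}\cong C_p\times C_2\times C_2$ (so $G$ is nilpotent, hence $G\cong C_{p^a}\times Q$ with $Q$ a $2$-generated $2$-group), one enumerates the subgroups of each candidate and inspects neighbourhood independence numbers: the survivors are $C_p\times C_p$, $Q_8$, $C_4\times C_2$, $C_4\times C_4$ in the first case and $C_p\times C_2\times C_2$, $C_p\times Q_8$ $(p>2)$ in the second, while every other candidate --- $D_8$, $C_8\times C_2$, $C_{p^2}\times C_p$, $Q_{16}$, $C_p\times D_8$, $C_p\times C_4\times C_2$, and so on --- has a subgroup of prime index admitting four pairwise non-adjacent complements, hence an induced $K_{1,4}$.

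\emph{Main obstacle.} The crux is this last ``lifting'' step, done simultaneously with the non-example verification: unlike connectedness or bipartiteness, $K_{1,4}$-freeness is \emph{not} determined by $G/\Phi(G)$, so each candidate must be examined subgroup by subgroup, and the delicate point is locating precisely where the boundary lies (why $Q_8$, $C_4\times C_2$, $C_4\times C_4$ but none of the other $2$-groups of the same orders; why $C_p\times Q_8$ but not $C_{p^2}\times Q_8$ or $C_p\times D_8$). A secondary difficulty is making the exclusion of non-solvable $G$ uniform across all orders.
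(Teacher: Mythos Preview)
Your overall strategy---classify the Frattini quotient first, then lift---is different from the paper's nilpotent/non-nilpotent split, and is a reasonable way to organise the argument. However, there is a concrete error in the lifting step over $\ol{G}\cong S_3$, and the $p$-group lifting is not really a proof.

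\textbf{The $S_3$ case is backwards.} With $\ol{G}=C_3\rtimes C_2$ and Proposition~\ref{G/Phi(G)=metacyclic}, you write $G=\langle x,y:x^{3^m}=y^{2^n}=1,\,x^y=x^s\rangle$ and claim $K_{1,4}$-freeness forces $n=1$ and $m\le 3$. That yields the dihedral-type groups $D_{2\cdot 3^m}$, which are \emph{not} the groups in~(4): the groups in~(4) have Sylow $3$-subgroup of order exactly $3$ and Sylow $2$-subgroup cyclic of order $2^n$ with $n\le 3$. Concretely, $D_{18}$ already fails: it has nine involution subgroups, all pairwise non-adjacent and all adjacent to the cyclic maximal subgroup $\langle x\rangle$, giving an induced $K_{1,9}$. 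Your own sufficiency computation $\F(G)\cong K_{3,n}$ (with $n$ the $2$-exponent) was correct; in the necessity direction you have swapped the roles of the two primes. The paper avoids this pitfall by arguing directly that every non-normal maximal subgroup has index $3$ and is a cyclic $2$-group, which forces the $3$-part to be $C_3$ from the start.

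\textbf{The $p$-group lifting is not bounded.} For $\ol{G}\cong C_p\times C_p$ you propose to ``enumerate the subgroups of each candidate'', but there are infinitely many $2$-generated $p$-groups, and your list of non-examples (``$D_8$, $C_8\times C_2$, \dots, and so on'') is not a finiteness argument. The paper handles this by first bounding the number of conjugates of any vertex ($[G:N_G(H)]\le 3$), which already forces $p\in\{2,3\}$; it then shows $G$ has class at most $2$ (via a detailed analysis of the possible normalizers and a reduction to four explicit groups of order $16$), whence $|G'|\le 4$ and $|G|\le 64$, after which a finite check suffices. Some structural bound of this kind is unavoidable here.

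\textbf{Solvability.} Your parenthetical ``for example a subgroup adjacent to four or more pairwise non-adjacent Sylow subgroups'' is not a proof that non-solvable groups contain an induced $K_{1,4}$; it would need a uniform argument. The paper never isolates solvability as a lemma: in the non-nilpotent case it derives the structure directly from the conjugate-count bound $[G:N_G(M)]\le 3$ for any maximal $M$ arising in a factorization.
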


We prove the above theorem by breaking it up into two propositions separating the nilpotent and non-nilpotent cases.
%--------------------------------------------------
\begin{proposition}\label{nilpotent k1,4-free}
Let $G$ be a finite nilpotent factorizable group. Then $\F(G)$ is $K_{1,4}$-free if and only if $G$ is isomorphic to one of the following groups:
\begin{itemize}
\item[(1)]$C_{pqr}$, $C_p\times C_p$, $Q_8$, $C_4\times C_2$, $C_4\times C_4$;
\item[(2)]$C_{p^mq^n}$ for $1\leq m,n\leq3$; or
\item[(3)]$C_p\times Q_8$ or $C_p\times C_2\times C_2$ for $p>2$,
\end{itemize}
where $p,q,r$ are distinct primes.
\end{proposition}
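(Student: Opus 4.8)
The plan is to exploit the fact that a finite nilpotent group is the direct product of its Sylow subgroups, and that $\F(G)$ being $K_{1,4}$-free is inherited by ``reasonable'' subquotients. First I would reduce to a bounded situation. Since $\F(G/\Phi(G))$ embeds as an induced subgraph of $\F(G)$, any star $K_{1,4}$ in $\F(G/\Phi(G))$ lifts; conversely, for a $p$-group one checks that if $|G/\Phi(G)|=p^d$ with $d\geq 2$ then the maximal subgroups already carry a lot of adjacency. The key numerical observation is: if $G=A\times B$ with $A,B$ nontrivial, then $A$ has (at least) three distinct proper factor-complements available when $|B|$ is not prime or when $A$ itself has enough subgroups, and each yields a distinct neighbor of a fixed vertex. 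So the first step is to show $d(G)\le 2$ for each Sylow subgroup and that $G$ has at most two nontrivial Sylow subgroups (else one finds $K_{1,4}$ centered at a subgroup of one Sylow factor using three subgroups from the others). This forces $G$ to be a $p$-group with $d\le 2$, a product of two such, or (in the abelian case) something like $C_p\times C_p$ times a cyclic piece.

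Next I would handle the $p$-group case. If $G$ is a factorizable $p$-group then $d(G)\geq 2$, and I claim $d(G)=2$ and moreover $G/\Phi(G)\cong C_p\times C_p$ with tight control on $|G|$. The point: $C_p\times C_p$ has $p+1$ subgroups of order $p$, any two of which are adjacent in $\F$, so already for $p\geq 3$ we get $K_{1,3}$ and then, lifting through $\Phi(G)$, enough extra vertices (the preimages of these order-$p$ subgroups, plus $G/\Phi(G)$-level maximal subgroups) to build $K_{1,4}$ unless $|G|$ is very small. Concretely I expect to rule out $p\geq 3$ except $C_p\times C_p$ itself, and for $p=2$ to be left with $C_2\times C_2$, $C_4\times C_2$, $C_4\times C_4$, $Q_8$ as the survivors, which one verifies directly have $K_{1,4}$-free factorization graphs by listing their (few) vertices. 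The elementary abelian and $Q_8$ verifications are small finite checks; the slightly delicate part is showing e.g. $C_8\times C_2$, $C_8\times C_8$, $C_4\times C_4\times\cdots$, $C_2^3$, $D_8$, and the non-abelian groups of order $p^3$ all contain an induced $K_{1,4}$ — here one must produce four pairwise non-adjacent neighbors of one vertex, not just four neighbors, so the counting must track intersections and avoid accidental factorizations among the leaves.

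Then the genuinely two-prime case: $G=P\times Q$ with $P$ a $p$-group, $Q$ a $q$-group, $p\neq q$. If both $P$ and $Q$ are non-cyclic, or if one is non-cyclic of order $\geq p^2$ and the other nontrivial, I expect to find $K_{1,4}$ quickly (a non-superfluous subgroup of $P$ together with subgroups $1\times Q$, a maximal $P\times Q'$, etc., as leaves). This should leave the cases $P,Q$ both cyclic — giving $C_{p^mq^n}$, where a direct degree count in $\F(C_{p^mq^n})$ (its vertices are $C_{p^iq^j}$ with $(i,j)$ not both maximal and not both zero, adjacency being $i+i'\ge m$ or... rather $\max$ conditions) shows $K_{1,4}$-freeness exactly when $m,n\leq 3$ — and the mixed cases $C_p\times(C_2\times C_2)$ and $C_p\times Q_8$ for odd $p$, plus $C_{pqr}$ (which is $C_p\times C_q\times C_r$, the one three-prime survivor, to be re-examined: in $C_{pqr}$ the vertices are the six subgroups of composite-or-prime order other than the whole group, and one checks no vertex has four pairwise non-adjacent neighbors). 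For the $C_p\times Q_8$ and $C_p\times C_2\times C_2$ families the verification is again finite once $p$ is fixed abstractly, since the subgroup lattice is $p$-independent up to the obvious direct factor.

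The main obstacle I anticipate is not any single hard idea but the bookkeeping in the ``find an induced $K_{1,4}$'' direction: ruling out a group requires exhibiting one vertex with four mutually non-adjacent neighbors, and mutual non-adjacency of the leaves is exactly a \emph{non}-factorization condition, which is easy to violate by accident (two of your chosen leaves may multiply to $G$). So the proof will hinge on a clean uniform construction of such a configuration — most likely taking as leaves the $\Phi(G)$-preimages of the order-$p$ subgroups inside a $C_p\times C_p$ section (these pairwise fail to factor $G$ because their product lies in a fixed maximal subgroup), and as the center a suitable maximal or second-maximal subgroup hitting all of them — together with a short list of genuinely small groups ($C_2^2$, $C_4\times C_2$, $C_4\times C_4$, $Q_8$, $C_{pqr}$, $C_{p^mq^n}$ for $m,n\le 3$, $C_p\times Q_8$, $C_p\times C_2^2$) where the claim is checked by hand.
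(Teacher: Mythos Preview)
Your overall case split (\emph{$p$-group}; \emph{two primes}; \emph{three or more primes}) matches the paper's, and your treatment of the non-$p$-group case is essentially the same as theirs: write $G=P\times H$, force the complementary factor to be $C_q$, push the constraint down to $P$, and in the cyclic case bound the exponents by explicit $K_{1,4}$'s. That part is fine.

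The genuine gap is the $2$-generated $p$-group case, which is where almost all of the paper's work goes. Your proposed ``clean uniform construction'' --- taking as leaves the $\Phi(G)$-preimages of the order-$p$ subgroups in a $C_p\times C_p$ section --- does not do what you want when $d(G)=2$: the only $C_p\times C_p$ quotient is $G/\Phi(G)$ itself, its order-$p$ subgroups pull back to the \emph{maximal} subgroups of $G$, and in a $p$-group any two distinct maximal subgroups are normal and satisfy $M_1M_2=G$. So these leaves are pairwise \emph{adjacent}, not pairwise non-adjacent, and you get a clique, not a star. You therefore have no mechanism to produce an induced $K_{1,4}$ in a large non-abelian $2$-generated $p$-group, and ``checking $D_8$ and groups of order $p^3$'' is not enough: a priori there could be arbitrarily large $2$-generated $p$-groups whose factorization graph is $K_{1,4}$-free.

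The paper's missing ingredient is a conjugacy bound: if some vertex $H$ has four conjugates, then any maximal subgroup not containing $H$ together with four conjugates of $H$ gives an induced $K_{1,4}$ (the conjugates are pairwise non-adjacent since $|H^a\cdot H^b|=|H|^2/|H^a\cap H^b|<|G|$ whenever $H$ is not maximal, and more care otherwise). Hence $[G:N_G(H)]\le 3$ for every vertex, which immediately forces $p\in\{2,3\}$ in the non-abelian case. From there the paper shows $G$ must have nilpotency class $2$ (by analysing $N_G(H)$ for a non-normal vertex $H$ and ruling out the candidate structures for groups of order $16$), bounds $G/G'$ via the already-settled abelian case, bounds $|G'|$ since $\exp(G')=\exp(G/Z(G))$, and is left with a finite list of orders ($8,16,27,32,64$) that is finished by machine. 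None of this structural descent is visible in your plan, and without the conjugacy bound you have no way to reduce to finitely many groups.
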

\begin{proof}
Assume $\F(G)$ is $K_{1,4}$-free. We have two cases to consider:

Case 1. $G$ is a $p$-group. If $d:=d(G)\geq3$, then $G/\Phi(G)=\gen{\ol{x}_1,\ldots,\ol{x}_d}$ for some $x_1,\ldots,x_d\in G$, and hence
\[\{\gen{\ol{x}_2,\ldots,\ol{x}_d},\gen{\ol{x}_1},\gen{\ol{x}_1\ol{x}_2},\gen{\ol{x}_1\ol{x}_3},\gen{\ol{x}_1\ol{x}_2\ol{x}_3}\}\]
induces a subgraph of $\F(G/\Phi(G))$ isomorphic to $K_{1,4}$, which is a contradiction. Thus, $d(G)\leqslant2$. If $d(G)=1$, then $\F(G)$ is a null graph, which is impossible by the hypothesis. Therefore, $d(G)=2$.

If $H$ is a vertex of $\F(G)$ with at least four conjugates in $G$ and $M$ is a maximal subgroup of $G$ not containing $H$, then $M$ along with four conjugates of $H$ induce a subgraph of $\F(G)$ isomorphic to $K_{1,4}$, which is a contradiction. Thus, $[G:N_G(H)]\leqslant3$ for all $H\in V(\F(G))$. We distinguish two cases:

Subcase 1.1. $G$ is abelian. Then $G=\gen{x}\times\gen{y}\cong C_{p^a}\times C_{p^b}$ for some $a\geq b\geq1$. If $a>1$, then 
\[\left\{\gen{x},\gen{y},\gen{x^py},\gen{x^{-p}y},\gen{x^p,y},\gen{x^{p^2},y}\right\}\]
induces a subgraph of $\F(G)$ isomorphic to $K_{1,4}$ or $K_{1,5}$ unless $p^a=4$. Therefore, either $G\cong C_p\times C_p$ whose graph is complete, or $G\cong C_4\times C_2$ or $C_4\times C_4$ whose graphs are complements of $2K_1\cup K_3$ and $3K_3$, respectively.

Subcase 1.2. $G$ is non-abelian. Clearly, $Z(G)\subseteq\Phi(G)$. Suppose first that $p\geq5$. Then every vertex of $\F(G)$ is a normal subgroup of $G$. Let $x,y\in G$ be such that $G=\gen{x,y}$ and $|x|+|y|$ is minimum. Since $\gen{x}$ and $\gen{y}$ are normal subgroups of $G$, it follows that $G$ is nilpotent of class $2$ for $[x,y]\in\gen{x}\cap\gen{y}\subseteq Z(G)$. Since $G$ is nonabelian we should have $\gen{x}\cap\gen{y}\neq1$. Then $x^{p^i}=y^{p^j\lambda}\neq1$ for some $\lambda$ coprime to $p$. Without loss of generality, assume that $i\leqslant j$. Then $(xy^{-p^{j-i}\lambda})^{p^i}=1$ so that $\{xy^{-p^{j-i}\lambda},y\}$ is a generating set of $G$ satisfying $|xy^{-p^{j-i}\lambda}|+|y|<|x|+|y|$ contradicting the assumption. Therefore, $p=2$ or $3$. 

We show that $G$ is nilpotent of class $2$. If not, $G$ has a vertex $H$, which is not a normal subgroup of $G$. Let $M$ be a maximal subgroup of $G$ not containing $H$. If $H$ has three distinct conjugates $H$, $H^a$, and $H^b$, then 
\[\{M,H,H^a,H^b,N_G(H)\}\]
induces a subgraph of $\F(G)$ isomorphic to $K_{1,4}$, which is a contradiction. Thus, $H$ has exactly two conjugates $H$ and $H^a$. If $K$ is a subgroup of $G$ such that $H\subset K\subset N_G(H)$ or $N_G(H)\subset K\subset G$, then again
\[\{M,H,H^a,K,N_G(H)\}\]
induces a subgraph of $\F(G)$ isomorphic to $K_{1,4}$, which is a contradiction. Also, if $H$ is non-cyclic and $x\in H\setminus M$, then
\[\{M,H,H^a,\gen{x},N_G(H)\}\]
induces a subgraph of $\F(G)$ isomorphic to $K_{1,4}$, which results in a contradiction. Thus, $H=\gen{x}$ is a cyclic maximal subgroup of $N_G(H)$, which itself is a maximal subgroup of $G$. By \cite[5.3.4]{djsr}, $N_G(H)=\gen{x}\rtimes\gen{y}$ is isomorphic to one of the groups $C_{2^{n-1}}\times C_2$, $D_{2^n}$, $Q_{2^n}$, $SD_{2^n}$, or $M_{2^n}$ for some $n$, where $SD_{2^n}$ and $M_{2^n}$ are defined as
\[SD_{2^n}=\gen{u,v:u^{2^{n-1}}=v^2=1,u^v=u^{2^{n-2}-1}}\]
and
\[M_{2^n}=\gen{u,v:u^{2^{n-1}}=v^2=1,u^v=u^{2^{n-2}+1}},\]
respectively.
Since $N_G(H)$ has two distinct cyclic maximal subgroups $H$ and $H^a$, it follows that $N_G(H)\cong C_{2^{n-1}}\times C_2$, $Q_8$, or $M_{2^n}$. The only non-abelian groups of order $16$ generated by two elements whose non-Frattini subgroups have at most two conjugates are 
\begin{align*}
G_1&=\gen{u,v:u^4=v^2=[v,u,u]=[v,u]^2=1},&G_2&=\gen{u,v:u^4=v^4=u^vu=1},\\
G_3&=\gen{u,v:u^8=v^2=u^vu^3=1},&G_4&=\gen{u,v:u^8=u^vu=1,u^4=v^2}.
\end{align*}
If 
\begin{align*}
V_1&=\{\gen{u,u^v},\gen{v},\gen{v^u},\gen{u^2,v},\gen{u^2,v^u}\},&V_2&=\{\gen{u,v^2},\gen{v},\gen{v^u},\gen{vu^{-1}},\gen{u^{-1}v}\},\\
V_3&=\{\gen{u},\gen{v},\gen{v^u},\gen{u^2v},\gen{u^2,v}\},&V_4&=\{\gen{u},\gen{v},\gen{v^u},\gen{u^{-1}v},\gen{vu^{-1}}\},
\end{align*}
then the subgraphs of $\F(G_i)$ induced by $V_i$ are isomorphic to $K_{1,4}$. Thus, $|G|>16$ and hence $N_G(H)\cong C_{2^{n-1}}\times C_2$ or $M_{2^n}$. It is easy to see that $\Phi(G)=\gen{x^2}\times\gen{y}$. Then $\gen{x^4}=\Phi(G)^2$ is a normal subgroup of $G$. Since $G'\nsubseteq\gen{x}$, we observe that $G/\gen{x^4}$ is a non-abelian group of order $16$ with all non-Frattini subgroups having at most two conjugates. Hence, $G/\gen{x^4}\cong G_i$ for some $1\leqslant i\leqslant 4$ so that $\F(G/\gen{x^4})$ has an induced subgraph isomorphic to $K_{1,4}$ contradicting the fact that $\F(G/\gen{x^4})$ is isomorphic to an induced subgraph of $\F(G)$ and $\F(G)$ is $K_{1,4}$-free. 

Therefore, $G$ is nilpotent of class $2$. Moreover, $G'$ is cyclic and $\exp(G')=\exp(G/Z(G))$. Clearly, $G/G'\cong C_p\times C_p$, $C_2\times C_4$, or $C_4\times C_4$ otherwise $\F(G/G')$, and hence $\F(G)$, would have an induced subgraph isomorphic to $K_{1,4}$ by Subcase 1.1. Then $G'\cong C_2$, $C_3$, or $C_4$ so that $|G|=8$, $16$, $27$, $32$, or $64$. A simple computation with GAP shows that $G\cong Q_8$ (see the codes after Corollary \ref{claw-free}).

Case 2. $G$ is not a $p$-group. If $G$ is non-cyclic, then we can write $G=H\times P$, where $P$ is a non-cyclic Sylow $p$-subgroup of $G$. Let $M_1,M_2,M_3$ be three distinct maximal subgroups of $P$. If $H$ is not a cyclic $q$-group of order $q$, then $H$ has a proper subgroup $K$. Then the subgraph induced by 
\[\{HM_1,M_2,M_3,KM_2,KM_3\}\]
is isomorphic to $K_{1,4}$, which is a contradiction. Thus, $H\cong C_q$ for some prime $q\neq p$. If $\F(P)$ has a claw as a subgraph, say $\{A,B,C,D\}$ with $A$ being adjacent to $B$, $C$, and $D$, then the subgraph induced by 
\[\{HA,P,B,C,D\}\]
is isomorphic to $K_{1,4}$, a contradiction. Therefore, $\F(P)$ is claw-free, which implies that $P\cong Q_8$ or $C_2\times C_2$ by Case 1. Therefore, $G\cong C_q\times Q_8$ or $C_q\times C_2\times C_2$. Finally, suppose that $G$ is cyclic and let $G=P_1\times\cdots\times P_n$ be the decomposition of $G$ into Sylow $p_i$-subgroups $P_i$ of $G$. If $n\geq4$, then the subgraph induced by 
\[\{P_1\ldots P_{n-1},P_n,P_1P_n,P_2P_n,P_3P_n\}\]
is isomorphic to $K_{1,4}$, which is a contradiction. Thus, $n\leqslant3$. Assume $n=3$. Then the subgraph induced by
\[\{P_iP_j,P_k,P_iP_k,P_jP_k,P_i^{p_i}P_k\}\]
is isomorphic to $K_{1,4}$ whenever $\{i,j,k\}=\{1,2,3\}$ and $|P_i|>p_i$, which is a contradiction. Thus, $\Phi(G)=1$ and so $G\cong C_{p_1p_2p_3}$.

Now, suppose $n=2$. If $|P_i|>p_i^3$ for some $i\in\{1,2\}$, then the subgraph induced by 
\[\{P_i,P_j,P_i^{p_i}P_j,P_i^{p_i^2}P_j,P_i^{p_i^3}P_j\}\]
is isomorphic to $K_{1,4}$ whenever $j\neq i$, which is a contradiction. Therefore, $|P_i|\leqslant p_i^3$ for $i=1,2$ and $G$ is isomorphic to a subgroup of $C_{p_1^3p_2^3}$. 

The converse is straightforward.
\end{proof}
%--------------------------------------------------
\begin{proposition}\label{non-nilpotent k1,4-free}
Let $G$ be a finite non-nilpotent factorizable group. Then $\F(G)$ is $K_{1,4}$-free if and only if $G$ is isomorphic to
\[\gen{x,y:x^{2^n}=y^3=1,y^x=y^{-1},(x^2)^y=x^2}\]
for some $n\leqslant3$.
\end{proposition}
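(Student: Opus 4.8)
The plan is to show that a finite non-nilpotent factorizable group $G$ with $\F(G)$ being $K_{1,4}$-free must be, up to isomorphism, the group $H_n:=\gen{x,y:x^{2^n}=y^3=1,y^x=y^{-1},(x^2)^y=x^2}$ for some $n\leqslant 3$, the converse direction being a finite check. As in the proof of Proposition \ref{nilpotent k1,4-free}, I would first reduce the structure of $G$ drastically. Since $\F(G/\Phi(G))$ embeds as an induced subgraph of $\F(G)$, and a group with $d\geqslant 3$ always produces an induced $K_{1,4}$ in $\F(G/\Phi(G))$ (exactly the five-subgroup configuration used in Case~1 of the nilpotent proof), we get $d(G)\leqslant 2$. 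A non-nilpotent $2$-generated group $\ol G=G/\Phi(G)$ that is not a $p$-group has at least two distinct prime divisors; the key point is to pin down $\ol G$ using the bound on numbers of conjugates: if $H$ is any vertex of $\F(G)$ with at least four conjugates and $M$ is a maximal subgroup not containing $H$, then $M$ together with four conjugates of $H$ induces a $K_{1,4}$, so $[G:N_G(H)]\leqslant 3$ for every vertex $H$.

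Next I would analyze $\ol G$ itself. Being non-nilpotent and $2$-generated with every subgroup having at most $3$ conjugates, $\ol G$ has a normal Sylow subgroup structure forced by the conjugacy bound; I expect to conclude $\ol G \cong S_3 = C_3 \rtimes C_2$, arguing that any other primes or larger Sylow pieces create vertices with too many conjugates or create an induced $K_{1,4}$ from a maximal subgroup plus conjugates of a Sylow subgroup. (For instance, a Sylow $3$-subgroup of order $9$ in a group like $C_9\rtimes C_2$ would already be problematic, and a third prime divisor $r$ gives $\{P_qP_r, P_2, P_2P_r, \ldots\}$ type claws; these are the same style of arguments as in Case~2 of the previous proposition.) Once $\ol G\cong S_3$, the prime divisors of $|G|$ are exactly $2$ and $3$, say $|G| = 2^a 3^b$.

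Then I would bound the Sylow subgroups of $G$. Writing $P$ for a Sylow $2$-subgroup and $Q$ for a Sylow $3$-subgroup, the image of $P$ in $\ol G$ is $C_2$ and the image of $Q$ is $C_3$, so $P/(P\cap\Phi(G))$ and $Q/(Q\cap\Phi(G))$ are cyclic; hence $P$ and $Q$ are both cyclic (a $p$-group is cyclic iff its Frattini quotient is). So $G = C_{3^b}\rtimes C_{2^a}$ with $C_{2^a}$ acting on $C_{3^b}$. Because $\ol G\cong S_3$, the action is by inversion on $C_3$, and a quick check (or Proposition \ref{G/Phi(G)=metacyclic} applied to $\ol G$) shows the kernel of the action contains $x^2$, giving exactly the presentation $\gen{x,y:x^{2^a}=y^{3^b}=1,y^x=y^{-1},(x^2)^y=x^2}$. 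I would then rule out $b\geqslant 2$: with $b\geqslant 2$ the subgroups $\gen{y^3}\rtimes\gen{\cdot}$ proliferate, and a maximal subgroup together with four conjugates (or four cyclic subgroups of order $9$) of an order-$3^b$-related vertex induces a $K_{1,4}$. Similarly I would rule out $a\geqslant 4$ using the configuration $\{P, Q, P^{(2)}Q, P^{(4)}Q, P^{(8)}Q\}$ of subgroups of the form $\gen{x^{2^i}}\rtimes\gen{y}$ hanging off $\gen{x}$ and $\gen{y}$, exactly paralleling the cyclic $n=2$ argument at the end of Proposition \ref{nilpotent k1,4-free}. This leaves $b=1$ and $a\leqslant 3$, i.e. $G\cong H_n$ with $n\leqslant 3$.

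The main obstacle I anticipate is not the reduction to $\ol G\cong S_3$ (which is routine once the conjugacy bound is in hand) but the bookkeeping that shows $b=1$ and $a\leqslant 3$ while simultaneously confirming that $H_1, H_2, H_3$ genuinely are $K_{1,4}$-free. For the latter one must describe the full subgroup lattice of each $H_n$ and check every degree-$\geqslant 4$ vertex has two of its neighbors adjacent; since $|H_3|=24$ this is a short but real case analysis (and is where appealing to a GAP computation, as the authors do elsewhere, would be legitimate). The subtlety on the necessity side is making sure that when $a\geqslant 4$ the five exhibited subgroups really are pairwise non-adjacent in the right pattern — in particular that $\gen{x^{2^i}}\rtimes\gen{y}$ and $\gen{x^{2^j}}\rtimes\gen{y}$ are never a factorization of $G$ for $i,j\geqslant 1$, which follows because their product lies in $\gen{x^2}\rtimes\gen{y}$, a proper subgroup.
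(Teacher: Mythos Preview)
Your overall route---pass to $\ol G=G/\Phi(G)$, pin it down as $S_3$, and then lift---is different from the paper's, which never identifies $\ol G$ explicitly. The paper instead argues directly with maximal subgroups: from a factorization $G=MN$ into maximals and the conjugacy bound $[G:N_G(M)]\leqslant 3$, it gets a normal maximal $M$; then for any non-normal maximal $H$ (which exists since $G$ is non-nilpotent) the same bound forces $[G:H]=3$, and the key observation is that \emph{every proper subgroup of $H$ already lies in $M$} (otherwise $\{M,H,H^a,H^b,K\}$ is an induced $K_{1,4}$). From this, $H$ is a cyclic $p$-group, $|G|=3p^n$, and $p=2$ follows from $G/H_G\hookrightarrow S_3$. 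So the paper gets $|G|=3\cdot 2^n$ and the cyclicity of the Sylow $2$-subgroup in one stroke, with no ``$b$'' parameter to eliminate afterwards.

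Your plan is viable but contains one genuine error. The step ``$P/(P\cap\Phi(G))$ cyclic, hence $P$ cyclic, because a $p$-group is cyclic iff its Frattini quotient is'' conflates $P\cap\Phi(G)$ with $\Phi(P)$; these need not coincide (you only know $P\cap\Phi(G)$ is \emph{some} maximal subgroup of $P$, which says nothing about $d(P)$). The conclusion is nevertheless correct: once $\ol G\cong S_3=\gen{a,b:a^3=b^2=1,a^b=a^{-1}}$, Proposition~\ref{G/Phi(G)=metacyclic} applies with $m=3$, $n=2$ and already forces $G$ to be metacyclic with cyclic Sylow subgroups---so you should invoke that proposition for cyclicity, not for the action. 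With that fix, you still owe two arguments you only sketch: the reduction $\ol G\cong S_3$ (your conjugacy bound gives $n_2=3$ and every odd Sylow normal, but you must still bound the order), and the elimination of $b\geqslant 2$ (which the paper avoids entirely by its route). Neither is hard, but neither is quite ``routine'' either; the paper's maximal-subgroup argument is shorter precisely because it sidesteps both.
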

\begin{proof}
Assume $\F(G)$ is $K_{1,4}$-free. Since $\F(G)$ is non-empty, $G$ has a factorization into two maximal subgroups, say $M$ and $N$. If $[N:N_N(M)]\geq4$, then there exist $a,b,c\in N$ such that the subgraph induced by 
\[\{N,M,M^a,M^b,M^c\}\]
is isomorphic to $K_{1,4}$, which is a contradiction. So, $[G:N_G(M)]=[N:N_N(M)]\leqslant 3$. As a result, $G$ has normal maximal subgroups. Without loss of generality, we may assume that $M\normal G$.

Let $H$ be a non-normal maximal subgroup of $G$. Then $G=MH$ and like above, we conclude that $[G:N_G(H)]\leqslant3$, hence $[G:H]=3$. Let $H$, $H^a$, and $H^b$ be the distinct conjugates of $H$ in $G$. If $H$ has a subgroup $K$ such that $K\nsubseteq M$, then the subgraph induced by 
\[\{M,H,H^a,H^b,K\}\]
is isomorphic to $K_{1,4}$, which is a contradiction. Thus, every proper subgroup of $H$ is contained in $M$, from which it follows that $H$ is a cyclic $p$-group and $|G|=3p^n$ for some $n\geq1$. On the other hand, $|H/H_G|=2$ as $G/H_G$ embeds in $S_3$, hence $p=2$; there, $H_G$ is the core of $H$ in $G$ (the largest normal subgroup of $G$ contained in $H$). Now, since $G/C_G(H_G)$ is isomorphic to a subgroup of $\Aut(H_G)$ and $\Aut(H_G)$ is a $2$-group, it follows that $G=C_G(H_G)$ and consequently $H_G=Z(G)$.  Therefore, $G$ has the presentation
\[G=\gen{x,y:x^{2^n}=y^3=1,y^x=x^{2t}y^{-1},(x^2)^y=x^2}\]
for some integer $t$. From $y^x=x^{2t}y^{-1}$ in conjunction with the fact that $x^2$ and $y$ commute, $x$ is a $2$-element and $y$ is a $3$-element, it follows that $x^{2t}=1$ and $\gen{y}$ is a normal subgroup of $G$. Furthermore, $n\leqslant3$ otherwise the subgraph induced by 
\[\left\{\gen{x},\gen{y},\gen{y,x^{2^{n-1}}},\gen{y,x^{2^{n-2}}},\gen{y,x^{2^{n-3}}}\right\}\]
is isomorphic to $K_{1,4}$, which is a contradiction.

The converse is straightforward.
\end{proof}
%--------------------------------------------------
\begin{corollary}\label{claw-free}
Let $G$ be a finite factorizable group. Then $\F(G)$ is claw-free if and only if $G$ is isomorphic to one of the following groups:
\begin{itemize}
\item[(1)]$C_{pqr}$, $C_p\times C_p$, $Q_8$;
\item[(2)]$C_{p^mq^n}$ for $m,n=1,2$; or
\item[(3)]$C_p\times C_2\times C_2$ for $p>2$,
\end{itemize}
where $p,q,r$ are distinct primes.
\end{corollary}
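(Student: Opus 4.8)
The plan is to deduce the claw-free classification directly from the $K_{1,4}$-free classification, since every claw-free graph is automatically $K_{1,4}$-free. Thus the list of claw-free groups is a sublist of the groups appearing in Propositions \ref{nilpotent k1,4-free} and \ref{non-nilpotent k1,4-free}, and it remains only to determine, for each group $G$ on that larger list, whether $\F(G)$ actually contains an induced claw. I would go through the $K_{1,4}$-free list case by case.

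First I would dispatch the groups whose factorization graphs were already identified explicitly. For $C_p\times C_p$ the graph $\F(G)$ is complete, hence claw-free; $Q_8$ survives similarly since its three subgroups of order $4$ together with the center form a small complete-type graph with no induced $K_{1,3}$, and one checks directly there is no claw. For $C_4\times C_2$ and $C_4\times C_4$, whose graphs were identified in Subcase 1.1 of Proposition \ref{nilpotent k1,4-free} as the complements of $2K_1\cup K_3$ and $3K_3$ respectively, I would exhibit an explicit induced claw (the complement of $2K_1\cup K_3$ on five vertices contains an induced $K_{1,3}$, and likewise for the complement of $3K_3$ on nine vertices), so these two are excluded. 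For $C_p\times Q_8$ with $p>2$, one of the three subgroups of order $4$ in the $Q_8$-factor, call it $A$, is adjacent to the other two order-$4$ subgroups $B,C$ and to $C_p\times Z(Q_8)$; checking adjacencies among $B$, $C$, and $C_p\times Z(Q_8)$ one finds an induced claw, so $C_p\times Q_8$ is excluded while $C_p\times C_2\times C_2$ is kept (here $p>2$ and $\F(G)$ is small enough to verify claw-freeness by hand). This accounts for items (1) and (3) of the corollary.

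For the cyclic groups $C_{p^mq^n}$ with $1\le m,n\le 3$, I would use the same style of argument as in the proof of Proposition \ref{nilpotent k1,4-free}: if $m\ge 3$ (or by symmetry $n\ge 3$), the set $\{P_p,\, P_q,\, P_p^{p}P_q,\, P_p^{p^2}P_q\}$ induces a claw, and if $m=n=2$ a similar quadruple $\{P_p,\,P_q,\,P_p^{p}P_q,\,P_q^{q}P_p\}$ does; whereas for $m,n\le 1$ (the group $C_{pq}$, subsumed in $C_{pqr}$ with one prime trivial) and the remaining small cases $C_{p^2q}$, $C_{pq^2}$, $C_{p^2q^2}$ one checks claw-freeness. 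Combined with the observation that $C_{pqr}$ itself has $\F(C_{pqr})$ claw-free (a direct check on the lattice of its $2^3-2=6$ proper nontrivial subgroups), this yields items (1) and (2). Finally the non-nilpotent family $\gen{x,y:x^{2^n}=y^3=1,y^x=y^{-1},(x^2)^y=x^2}$ of Proposition \ref{non-nilpotent k1,4-free}: for $n=1$ this is $S_3$, whose graph is claw-free but which is in fact $C_2\rtimes C_3$ rather than a new entry (and is not claw-free—$\F(S_3)$ has the three subgroups of order $2$ and the subgroup of order $3$, a $K_{1,3}$, induced), and for $n=2,3$ one exhibits an induced claw among $\gen{x}$, $\gen{y}$, and two of the subgroups $\gen{y,x^{2^{n-1}}}$, $\gen{y,x^{2^{n-2}}}$; so none of these groups survives.

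The main obstacle I anticipate is not conceptual but bookkeeping: for each group that should be \emph{excluded} one must produce a concrete induced claw (four subgroups with exactly the right three edges and no others), and for each group that should be \emph{kept} one must verify the absence of any induced claw across the whole subgroup lattice; the latter is delicate for $Q_8$, $C_p\times C_2\times C_2$, and the cyclic groups $C_{p^2q}$, $C_{p^2q^2}$, where a short exhaustive check of the (not too numerous) vertices is needed. As in the proof of Proposition \ref{nilpotent k1,4-free}, the genuinely finite residual cases can be confirmed by a direct GAP computation, and the converse direction (claw-freeness of the listed groups) is then a routine finite verification.
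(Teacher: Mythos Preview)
Your strategy is exactly what the paper intends: the corollary is stated without proof, so the implicit argument is to run through the $K_{1,4}$-free list of Propositions~\ref{nilpotent k1,4-free} and~\ref{non-nilpotent k1,4-free} and test each group for an induced claw. The plan is fine; several of your concrete checks, however, are faulty, and one of the promised verifications would actually fail.

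You contradict yourself on $C_{p^2q^2}$: the quadruple $\{P_p,P_q,P_p^{\,p}P_q,P_q^{\,q}P_p\}$ is not a claw, since $P_p\cdot(P_q^{\,q}P_p)=C_{p^2q}\neq G$, so $P_p$ is not even adjacent to $P_q^{\,q}P_p$; in fact $\F(C_{p^2q^2})$ has exactly the four vertices $C_{p^2},C_{q^2},C_{p^2q},C_{pq^2}$ forming a $4$-cycle, and so is claw-free as you later assert. For $C_p\times Q_8$ your adjacency claim is wrong: two order-$4$ subgroups $A,B\leqslant Q_8$ satisfy $|AB|=8\neq 8p$, so they are \emph{not} adjacent in $\F(G)$; a valid claw is $\{Q_8,\;C_p,\;C_p\times Z(Q_8),\;C_p\times A\}$. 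In the non-nilpotent family your fourth vertex $\gen{y,x^{2^{n-2}}}$ equals $G$ when $n=2$; the claw that works for every $n\geqslant1$ is $\{\gen{y},\gen{x},\gen{x}^{y},\gen{x}^{y^{2}}\}$, because the three Sylow $2$-subgroups intersect pairwise in $\gen{x^2}$ and hence have product of size $2^{n+1}\neq 3\cdot2^{n}$. Finally, and most seriously, the assertion that $\F(C_p\times C_2\times C_2)$ is claw-free does not survive the hand check you propose: writing $C_p=\gen{a}$ and the Klein factor as $\gen{b,c}$, the set $\{\gen{a,b},\,\gen{b,c},\,\gen{c},\,\gen{bc}\}$ is an induced $K_{1,3}$ (the last three subgroups lie in $\gen{b,c}$ and are therefore pairwise non-adjacent), so this case cannot be ``kept'' as you claim.
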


The following GAP function is used in Proposition \ref{nilpotent k1,4-free} to verify whether the factorization graph of a group $G$ is $K_{1,4}$-free.
%--------------------------------------------------
\begin{verbatim}
IsK14Free:=function(G)
local V,HK;
 HK:=function(H,K)
  return Number(Union(List(K,k->RightCoset(H,k))))=Order(G);
 end;
 V:=Filtered(AllSubgroups(G),H->Order(H)<Order(G));
 return Number(Filtered(Combinations(V,5),S->\
    Set(List(S,H->Number(Filtered(S,K->HK(H,K)))))=[1,4]))=0;
end;
\end{verbatim}

To deal with the case of square-free graphs we need the following simple lemma. Recall that two subgroups $H$ and $K$ of a given group \textit{permute} if $HK=KH$. Also, a Frobenius group is said to be \textit{minimal} if it has no proper Frobenius subgroup.
%--------------------------------------------------
\begin{lemma}\label{mutuallypermutingsubgroups}
If $G=H_1\cdots H_n$ ($n\geq4$) is a product of mutually permuting subgroups $H_i$ such that no proper subset of $\{H_1,\ldots,H_n\}$ generates $G$, then $\F(G)$ has an induced square.
\end{lemma}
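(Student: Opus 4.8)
The plan is to produce four subgroups of $G$ that form an induced square in $\F(G)$, built as "partial products" of the $H_i$. Write $P_i = H_1\cdots H_{i-1}H_{i+1}\cdots H_n$ for the product omitting $H_i$; since the $H_j$ mutually permute, each $P_i$ is a subgroup, and by hypothesis $P_i$ is a proper subgroup of $G$. Note also $G = H_i P_i$, so $P_i$ is nonempty in $\F(G)$ provided $P_i\nsubseteq\Phi(G)$ — but this is automatic, since a subgroup inside $\Phi(G)$ is superfluous and could be dropped from the generating product, contradicting minimality (here one uses that $\Phi(G)$ consists precisely of non-generators). Thus $P_1,P_2,P_3,P_4$ are all genuine vertices of $\F(G)$.

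Next I would identify which pairs among $P_1,\dots,P_4$ are adjacent. For $i\neq j$, the product $P_iP_j$ contains $H_i$ (from $P_j$) and $H_j$ (from $P_i$) and every other $H_k$ (from either), so $P_iP_j = G$: every pair is adjacent. That gives a $K_4$, not a square, so the naive construction is too symmetric. The fix is to break two of the products further: replace $P_3$ and $P_4$ by subgroups $A\subseteq P_3$ and $B\subseteq P_4$ chosen so that $A$ and $B$ are \emph{not} adjacent to each other, while $A$ is still adjacent to $P_1,P_2$ (equivalently $P_1,P_2$) and $B$ likewise. Concretely, for $n\geq 4$ one has at least four factors, and the freedom is to use, say, $A = H_1\cdots\widehat{H_3}\cdots\widehat{H_4}\cdots H_n$ together with one carefully chosen proper subgroup of $H_3H_4$, and symmetrically for $B$; the key point is that $A$ and $B$ both lie in $H_1\cdots\widehat{H_3H_4}\cdots H_n \cdot (\text{proper part of }H_3H_4)$, a proper subgroup, so $AB\neq G$, whereas $AH_3\supseteq$ enough factors to fill $G$ when combined with $P_1$ or $P_2$. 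I would set this up so the resulting four-cycle is $P_1 - A - P_2 - B - P_1$ with the diagonals $P_1P_2 = G$ being the \emph{missing} edges — wait, that is backwards, so instead I would take the square on vertices $A, P_1, B, P_2$ in cyclic order with edges $A$–$P_1$, $P_1$–$B$, $B$–$P_2$, $P_2$–$A$ present and the two diagonals $A$–$B$ and $P_1$–$P_2$ absent. Since $P_1P_2 = G$ by the computation above, I must instead use two omitting-products that are \emph{not} a complementary pair; with $n\geq 4$ there are enough indices to choose $P_1$ and $P_2$ sharing an omitted index, forcing $P_1\cap P_2$-type obstructions so that $P_1P_2\neq G$.

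So the cleaner route, which I would actually carry out: pick distinct indices and set $U_1 = H_2H_3\cdots H_n$, $U_2 = H_1H_3H_4\cdots H_n$ (omit $H_1$, resp. $H_2$), and on the other side $W_1 = H_1H_2H_4H_5\cdots H_n$, $W_2 = H_1H_2H_3H_5\cdots H_n$ (omit $H_3$, resp. $H_4$) — using $n\geq 4$ so that omitting one index still leaves at least three factors, and so that $\{1,2\}$ and $\{3,4\}$ are disjoint. Then $U_1W_1 = G$ since between them they contain all $H_i$; similarly $U_1W_2 = U_2W_1 = U_2W_2 = G$. But $U_1U_2 = H_1H_2\cdots H_n = G$ as well, so again a diagonal is present. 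The genuine obstacle, and the heart of the argument, is thus arranging that exactly the two diagonals fail: this forces us to use subgroups that are \emph{not} all of the omitting-product form. I expect the actual proof replaces one vertex on each "side" by a proper subgroup of a relevant factor product — e.g. fix $1\neq g\in H_1$ and $1\neq h\in H_2$ (possible as $H_1,H_2\neq 1$ by minimality) and use $\langle g\rangle$-augmented products — engineered so that $U_1\cap U_2$ and $W_1\cap W_2$ land in a common proper subgroup. The main obstacle is precisely this: verifying non-adjacency of the two diagonal pairs while preserving adjacency of the four cycle edges and vertex-hood (not inside $\Phi(G)$) of all four subgroups; everything else is a short order/index computation using mutual permutability and the minimality of the generating set.
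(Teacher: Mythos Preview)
Your proposal never reaches a working construction. You correctly observe that the single-omission products $P_i=\prod_{j\neq i}H_j$ are pairwise adjacent, and you correctly diagnose that the task is to kill exactly two ``diagonal'' adjacencies while keeping a 4-cycle. But your attempted fix --- passing to proper subgroups of factor-products via auxiliary elements $g\in H_1$, $h\in H_2$ --- is left entirely unspecified, and nothing in your outline guarantees the required non-adjacencies. The sentence ``engineered so that $U_1\cap U_2$ and $W_1\cap W_2$ land in a common proper subgroup'' is a hope, not an argument.

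The paper resolves the difficulty purely combinatorially, without any element choices. Set $K=H_5\cdots H_n$ (so $K=1$ if $n=4$) and take the four partial products
\[
A=H_1H_2K,\qquad B=H_3H_4K,\qquad C=H_1H_2H_3K,\qquad D=H_1H_3H_4K.
\]
Each is a proper subgroup by the minimality hypothesis (each omits at least one $H_i$), and each is a vertex of $\F(G)$ since it participates in a proper factorization of $G$. One checks directly that $AB=CD=AD=BC=G$ (the union of the index sets involved is always $\{1,\dots,n\}$), while $AC=H_1H_2H_3K\neq G$ and $BD=H_1H_3H_4K\neq G$ (the index sets miss $4$ and $2$, respectively). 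Thus $A\text{--}B\text{--}C\text{--}D\text{--}A$ is an induced square. The idea you were missing is simply to mix \emph{two-index} omissions with \emph{one-index} omissions so that containment of index sets (e.g.\ $\{1,2\}\subset\{1,2,3\}$) forces the two diagonals to fail.
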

\begin{proof}
Let $K=H_5\cdots H_n$. Then a simple verification shows that the subgraph induced by 
\[\{H_1H_2K,H_3H_4K,H_1H_2H_3K,H_1H_3H_4K\}\]
is an induced square, as required.
\end{proof}
%--------------------------------------------------
\begin{theorem}\label{4-cycle}
Let $G$ be a finite factorizable group. If $\F(G)$ is square-free, then either $G$ contains a unique normal maximal subgroup $H$ such that
\begin{itemize}
\item[(i)]$H$ is perfect,
\item[(ii)]$\ol{H}$ is simple, 
\item[(iii)]$G=\gen{K,g}$ for all $g\in G\setminus H$ and every proper subgroup $K$ of $H$ on which $H$ factorizes, and
\item[(iv)]either $\ol{H}$ is the only nontrivial proper normal subgroup of $\ol{G}$, or $\ol{G}\cong\ol{H}\times C_p$ and $\ol{H}$ is non-factorizable;
\end{itemize}
or $G$ is isomorphic to one of the following groups:
\begin{itemize}
\item[(1)]$C_{pqr}$;
\item[(2)]$C_{p^kq}$ for $k\geq1$;
\item[(3)]$Q_8\times C_p$ for $p>2$;
\item[(4)]$C_p\times C_p\times C_q$ for $p\neq q$;
\item[(5)]a Frattini-by-(minimal Frobenius) group;
\item[(6)]a $p$-group $G$ generated by three elements such that $\gen{x,y}\leqslant G$ is maximal whenever $G=\gen{x,y,z}$ for some $z\in G$; or
\item[(7)]a non-cyclic $p$-group $G$ generated by two elements with cyclic Frattini subgroup.
\end{itemize}
Moreover, each of the groups in (1)--(7) has a square-free factorization graph.
\end{theorem}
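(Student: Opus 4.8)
The plan is to establish both directions. The forward direction rests on two devices. The first is Lemma~\ref{mutuallypermutingsubgroups}. The second, which I use constantly, is the following elementary remark: \emph{if $C$ is a vertex of $\F(G)$ that is not a maximal subgroup and $C$ has two distinct non-adjacent neighbours $B,D$, then $\F(G)$ has an induced square} — picking any $A$ with $C\subsetneq A\subsetneq G$ (possible since $C$ is non-maximal), one has $AC=A\neq G$ while $BA\supseteq BC=G$ and $DA\supseteq DC=G$, so $\{A,B,C,D\}$ induces a $4$-cycle. Equivalently, in a square-free $\F(G)$ the neighbourhood of every non-maximal vertex is a clique. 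Throughout I first replace a given proper factorization $G=AB$ by one $G=MN$ into maximal subgroups, and I use freely that $\F(\ol G)$, $\ol G=G/\Phi(G)$, is an induced subgraph of $\F(G)$ and so is itself square-free; recall $\Phi(\ol G)=1$.

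Suppose first that $G$ is solvable. If $\ol G$ is nilpotent, write $G=P_1\times\cdots\times P_r$ with $P_i$ the Sylow subgroups. Since the $P_i$ mutually permute and no proper subset of them generates $G$, Lemma~\ref{mutuallypermutingsubgroups} gives $r\leqslant3$; applying it again to the subgroups $\gen{x_i}\Phi(P)$ attached to a minimal generating set of a Sylow subgroup $P$ (they mutually permute, have product $P$, and no proper subset of them has product $P$) bounds the generator numbers: $d(P_i)\leqslant3$ when $r=1$, $d(P_i)\leqslant2$ when $r=2$, and $|P_i|=p_i$ when $r=3$. Feeding the ``non-maximal vertex with two non-adjacent neighbours'' test into the remaining candidates (for instance inside $P_1\times C_q$ or $C_{p^k}\times C_{q^\ell}$) eliminates everything except the listed groups: $r=3$ forces $C_{p_1p_2p_3}$; $r=2$ forces $C_{p^k}\times C_q$, $C_p\times C_p\times C_q$, or $Q_8\times C_p$ with $p$ odd; and $r=1$ leaves exactly the $p$-groups of (6) and (7), the dividing line being whether every pair of elements occurring in a minimal generating triple generates a maximal subgroup, respectively whether $\Phi(G)$ is cyclic. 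If $\ol G$ is not nilpotent, let $\ol F=\Fit(\ol G)$; since $\Phi(\ol G)=1$ and $\ol G$ is solvable, $\ol F\neq\ol G$ is a self-centralizing, complemented product of minimal normal elementary abelian subgroups. If more than one minimal normal subgroup occurs, or $\ol F$ is a $p$-group on which a complement acts reducibly, or the complement is not of prime order, or the action is not fixed-point-free, then in each case one exhibits either a non-maximal vertex whose neighbourhood fails to be a clique or four mutually permuting proper subgroups with no generating proper subset (a minimal normal direct factor, a prime-order complement, a cyclic subgroup fixed by a non-identity element of the complement, and the remaining part), so a square appears. Hence $\ol G\cong C_p^{\,d}\rtimes C_q$ is a minimal Frobenius group, i.e.\ $G$ is Frattini-by-(minimal Frobenius), which is (5).

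Now suppose $G$ is not solvable; here I aim at the first alternative. First, $G$ has a normal maximal subgroup: otherwise $G=G'$, and writing $G=MN$ with $M$ maximal (necessarily self-normalizing, hence with exactly $[G:M]=n\geqslant5$ conjugates, as $G$ maps onto a transitive perfect permutation group of that degree), the transitivity of $N$ on the conjugates of $M$ together with the transitivity of $M$ on the conjugates of $N$ produces two conjugates of $M$ and two conjugates of $N$ that induce a square. Fix a normal maximal subgroup $H$; then $[G:H]$ is prime and $H$ is non-solvable (otherwise $G$ would be). One shows $H$ is perfect and $\ol H=H\Phi(G)/\Phi(G)$ is simple: were $H$ to have a proper subgroup of prime index, or $\ol H$ to fail to be simple, the resulting chain of normal subgroups together with $[G:H]$ prime would yield a non-maximal vertex with non-adjacent neighbours. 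Uniqueness of $H$ follows similarly; property (iii) is obtained by noting that if $H=KL$ with $K$ proper and $\gen{K,g}\subsetneq G$ for some $g\in G\setminus H$, then $\gen{K,g}$, $H$, and suitable translates of $K$ and $L$ induce a square; and property (iv) is forced because otherwise $\ol G$ has two independent proper normal subgroups giving mutually permuting factors.

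For the converse one computes $\F(G)$ directly in each family. For $C_{p^k}\times C_q$ the graph is a star with centre $\gen a$; for $S_3$, $A_4$, and more generally any minimal Frobenius group it is again a star (centre the Frobenius kernel) together with isolated vertices, and the general case of (5) is obtained by lifting this structure through the nilpotent Frattini subgroup. For $C_p\times C_p$, $Q_8$, $C_p\times C_p\times C_q$, $Q_8\times C_p$ and the $p$-groups of (6) and (7), the non-isolated vertices of largest order form a clique to which the other vertices attach so that any four vertices either lie inside that clique or contain two non-adjacent ones whose common neighbours lie in the clique (hence are mutually adjacent); either way there is no induced $C_4$. The finitely many small groups that must be inspected by hand (certain groups of orders $16$, $27$, $32$, $64$) are disposed of by the GAP routine already displayed. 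The principal obstacle is the non-solvable case: excluding perfect factorizable groups and pinning down (i)--(iv) is delicate, and turning it into an explicit list would need the classification of factorizations of almost simple groups — which is exactly why only the partial conclusion (i)--(iv) is reached there; a secondary difficulty is isolating the precise $p$-group conditions in (6) and (7) and verifying the infinite family (5).
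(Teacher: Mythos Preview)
Your route is genuinely different from the paper's. The paper's organizing tool is not your clique-neighbourhood remark but the stronger property~$(*)$: \emph{in every proper factorization $G=AB$, one of $A,B$ is a normal maximal subgroup}. This is obtained from two separate square constructions --- the conjugate square $\{A,B,A^b,B^a\}$ (with $a\in A$, $b\in B$) forces one factor to be normal, while the supergroup square $\{A,B,M,N\}$ (with $A\subsetneq M$, $B\subsetneq N$) forces one factor to be maximal --- followed by a short combination argument. Your remark captures only the maximality half, in dual form. The paper then branches on whether a fixed normal maximal factor $H$ is perfect: if so, repeated use of~$(*)$ yields (i)--(iv) directly; if not, then $H'$ lies in every non-normal maximal subgroup, so $H'$ is nilpotent by Gasch\"utz's theorem and $G$ is solvable, after which the argument splits on $|\pi(G)|\in\{1,2,3\}$ with~$(*)$ driving each branch. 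You instead split first on solvability and then on nilpotency of $\ol G$, handling the non-nilpotent solvable case via $\Fit(\ol G)$. That is a legitimate alternative and the nilpotent branch goes through essentially as you sketch, but two places are thin. In the non-nilpotent solvable branch you only list the failure modes (reducible action, complement not of prime order, more than one minimal normal subgroup, non-Frobenius action) without indicating how you secure the two required non-edges of the claimed squares; the paper's corresponding argument uses Maschke's theorem together with a careful choice of a $\ol{P_2}$-invariant complement $\ol A$ satisfying $\ol A\,\ol{P_2}^{\ol g}\neq\ol A\,\ol{P_2}$, and it is precisely this choice that makes the square induced. In the non-solvable branch your conjugate argument is essentially the paper's first square construction, but since you never isolated~$(*)$ you must redo its ingredients ad hoc when proving (ii)--(iv), whereas the paper simply invokes~$(*)$ at each step (for example, simplicity of $\ol H$ is obtained by exhibiting a factorization whose neither factor can be normal maximal). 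Both routes reach the same conclusion; the paper's is more uniform because~$(*)$ packages the normality and maximality constraints once and for all, while yours handles them piecemeal.
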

\begin{proof}
Let $G=AB$ be a proper factorization of $G$. Clearly, $G=A^bB^a$ for all $a\in A$ and $b\in B$. If $A\neq A^b$ and $B\neq B^a$ for some $a\in A$ and $b\in B$, then $\{A,B,A^b,B^a\}$ induces a square, which is a contradiction. Hence, for every proper factorization $G=AB$ of $G$, wither $A\normal G$ or $B\normal G$.

On the other hand, if $M$ and $N$ are proper subgroups of $G$ which properly contain $A$ and $B$, respectively, then $\{A,B,M,N\}$ induces a square, which is a contradiction. Hence, for every proper factorization $G=AB$ of $G$, either $A$ is a maximal subgroup of $G$ or $B$ is a maximal subgroup of $G$.

From the above result one can deduce

($*$) for every proper factorization $G=AB$ of $G$, either $A$ is a normal maximal subgroup of $G$ or $B$ is a normal maximal subgroup of $G$.

Now, let $G=HK$ be a proper factorization of $G$. Then either $H$ or $K$, say $H$, is a normal maximal subgroup of $G$ by property ($*$). 

First suppose that $H=H'$ is perfect. Then $H$ is the unique normal maximal subgroup of $G$, for otherwise $G$ has a normal maximal subgroup $M$ different from $H$ so that $H'\subseteq M\cap H\subset H$, which is a contradiction. 

We show that $\ol{H}=H/\Phi(G)$ is simple. Suppose on the contrary that $\ol{H}$ is not simple. If $\ol{H}$ has a nontrivial proper characteristic subgroup $\ol{N}$, then $N\normal G$ and hence $G=NM$ for some maximal subgroup $M$ of $G$. Then $M\normal G$ by ($*$), which contradicts the uniqueness of the normal maximal subgroup $H$. Thus, $\ol{H}$ is characteristically simple, that is, $\ol{H}=\ol{S_1}\times\cdots\times\ol{S_k}$ for some isomorphic simple groups $\ol{S_1},\ldots,\ol{S_k}$. Let $p:=[G:H]$ and $x\in G\setminus H$. Since $\ol{x}$ does not stabilize $\ol{S_1},\ldots,\ol{S_k}$, we observe that $\{\ol{S_1},\ldots,\ol{S_k}\}=\{\ol{S_1},\ol{S_1}^{\ol{x}},\ldots,\ol{S_1}^{\ol{x}^{p-1}}\}$ by the uniqueness of the Remak decomposition (see \cite[3.3.12]{djsr}). Hence, $k=p$ and we may assume that $\ol{S_i}=\ol{S_1}^{\ol{x}^{i-1}}$, for $i=1,\ldots,p$. Let $\ol{x}^p=\ol{x_1}\cdots\ol{x_p}$ with $\ol{x_i}\in\ol{S_i}$, for $i=1,\ldots,p$. Then $(\ol{x}\ol{x_1}^{-1})^p=\ol{y_2}\cdots\ol{y_p}$ with $\ol{y_i}=\ol{x_i}\cdot\ol{x_1}^{-\ol{x}^{i-1}}\in\ol{S_i}$, for $i=2,\ldots,p$. Let 
\[\ol{K}:=\{\ol{g}\ol{g}^{\ol{x'}}\cdots\ol{g}^{\ol{x'}^{p-1}}:\ol{g}\in\ol{S}_1\}\]
be a subgroup of $\ol{H}$, where $x':=xx_1^{-1}$. Since $\ol{x'}^p$ centralizes $\ol{S}_1$, it follows that $\ol{K}^{\ol{x'}}=\ol{K}$ so that $\gen{\ol{K},\ol{x}}$ is a proper subgroup of $\ol{G}$. Thus, we have the proper factorization
\[\ol{G}=(\ol{S}_1\times\cdots\times\ol{S}_{p-1})\gen{\ol{K},\ol{x'}},\]
which implies that $\gen{K,x}$ is a normal maximal subgroup of $G$ different from $H$ by ($*$), a contradiction. Therefore, $\ol{H}$ is simple.

If $H=KL$ is a proper factorization of $H$ and $g\in G\setminus H$, then $G=\gen{K,g}$; otherwise $G=\gen{K,g}L$ is a proper factorization of $G$ contradicting ($*$) in conjunction with the fact that $H$ is the only normal maximal subgroup of $G$. Next assume that $N$ is a proper normal subgroup of $G$ such that $H\neq N\nsubseteq\Phi(G)$. Since $H\cap N\normal H$, it follows that $H\cap N\subseteq\Phi(G)$. If $H$ is factorizable, then $H=KL$ for some proper subgroups $K$ and $L$ of $H$ so that $G=(KN)(LN)$. But then either $KN$ or $LN$ is a normal maximal subgroup of $G$ different from $H$ by ($*$), which is a contradiction. Hence, either $\ol{H}$ is the unique nontrivial proper normal subgroup of $\ol{G}$, or $\ol{G}=\ol{H}\times\ol{N}$ with $\ol{N}$ a cyclic group of prime order and $\ol{H}$ is non-factorizable, as required.

Now, assume that $H\neq H'$. We show that $G$ is solvable. Let $M$ be a non-normal maximal subgroup of $G$ and $g\in G\setminus M$. If $H'\nsubseteq M$, then $\{H,M,H',M^g\}$ induces a square, which is a contradiction. Hence, $H'\subseteq M$ so that $H'$ is nilpotent by using \cite[Satz 16]{wg}. Therefore, $G$ is solvable as $G/H$ is cyclic of prime order, $H/H'$ is abelian, and $H'$ is nilpotent. Let $\pi(G)=\{p_1,\ldots,p_n\}$. Then, by \cite[9.2.1]{djsr}, $G$ has mutually permuting Sylow $p_i$-subgroups $P_i$ for $i=1,\ldots,n$. By Lemma \ref{mutuallypermutingsubgroups}, $n\leqslant3$. Now, We have three possibilities.

(1) $n=3$. Then $G=P_i(P_jP_k)$ for $\{i,j,k\}=\{1,2,3\}$ so that $P_jP_k$ is a maximal normal subgroup of $G$ by ($*$). Hence, $P_i\cong C_{p_i}$. Similarly, $P_iP_j,P_iP_k\normal G$, which implies that $P_i=P_iP_j\cap P_iP_k$ is a normal subgroup of $G$. Therefore, $G\cong C_{p_1p_2p_3}$.

(2) $n=2$. Since $G=P_1P_2$ we may assume without loss of generality that $P_1$ is a normal maximal subgroup of $G$. Then $P_2=\gen{y}\cong C_{p_2}$. First suppose that $P_2\nnormal G$. From \cite[Theorems 5.3.3 and 5.3.11]{hb}, we know that $\Phi(G)=\Phi(P_1)$ so that $\ol{P_1}$ is an elementary abelian $p_1$-group. 

We show that $\ol{P_1}$ is a minimal normal subgroup of $\ol{G}$. If not, $\ol{P_2}$ does not act irreducibly on $\ol{P_1}$ by conjugation. We claim that $\ol{P_1}$ has a nontrivial proper $\ol{P_2}$-invariant subgroup $\ol{A}$ such that $\ol{A}\ol{P_2}^{\ol{g}}\neq\ol{A}\ol{P_2}$ for some $\ol{g}\in\ol{G}$. If the claim is not true for $\ol{A}$, then for every $\ol{g}\in\ol{G}$ we must have $\ol{A}\ol{P_2}^{\ol{g}}=\ol{A}\ol{P_2}$ so that $\ol{P_2}^{\ol{g}}=\ol{P_2}^{\ol{a}}$ for some $\ol{a}\in\ol{A}$. Hence, $\ol{g}\ol{a}^{-1}\in N_{\ol{G}}(\ol{P_2})$, which implies that $\ol{G}=\ol{A}N_{\ol{G}}(\ol{P_2})$. As a result, $\ol{A^*}:=\ol{P_1}\cap N_{\ol{G}}(\ol{P_2})$ in a nontrivial proper subgroup of $\ol{P_1}$. Also, $[\ol{A^*},\ol{P_2}]\subseteq\ol{P_1}\cap\ol{P_2}=\ol{1}$, which implies that  $\ol{A^*}$ is a $\ol{P_2}$-invariant subgroup of $\ol{P_1}$. Since $\ol{A^*}N_{\ol{G}}(\ol{P_2})=N_{\ol{G}}(\ol{P_2})\neq\ol{G}$, from the above argument on $A$, we observe that there must exists an element $\ol{g}\in\ol{G}$ such that $\ol{A^*}\ol{P_2}^{\ol{g}}\neq\ol{A^*}\ol{P_2}$. Hence, we may replace $\ol{A}$ by $\ol{A^*}$ and assume that $\ol{A}\ol{P_2}^{\ol{g}}\neq\ol{A}\ol{P_2}$ for some $\ol{g}\in\ol{G}$. On the other hand, by utilizing the well-known theorem of Maschke (see \cite[8.1.2]{djsr}), it yields $\ol{P_1}=\ol{A}\times\ol{B}$, where $\ol{B}$ is a nontrivial $\ol{P_2}$-invariant subgroup of $\ol{P_1}$. But then 
\[\{P_1,AP_2,B,AP_2^g\}\]
induces a square, which is a contradiction. As a result, it yields $\ol{G}=\ol{P_1}\rtimes\ol{P_2}$ is a Frobenius group whose Frobenius kernel and complement are $\ol{P_1}$ and $\ol{P_2}$, respectively. Moreover, $\F^*(G)$ is a star graph with $P_1$ at the center and $P_2^gK$ are the pendants where $g\in G$ and $K\leqslant\Phi(G)$ permutes with $P_2^g$, that is, $P_2^gK=KP_2^g$.

Now, suppose that $P_2\normal G$. Then $G\cong P_1\times P_2$. Let $\{x_1,\ldots,x_m\}$ be a minimal generating set for $P_1$. Since $G=\gen{P'_1,x_1}\cdots\gen{P'_1,x_m}\gen{y}$, by Lemma \ref{mutuallypermutingsubgroups}, we should have $m=1$ or $2$. Assume $m=2$ and $P_1=\gen{x_1,x_2}$. If $P_1\neq\gen{x_1}\gen{x_2}$, then
\[\{\gen{P'_1,x_1x_2},\gen{x_1,y},P_1,\gen{x_2,y}\}\]
induces a square, which is a contradiction. Hence, $P_1=\gen{x_1}\gen{x_2}$ for each generating set $\{x_1,x_2\}$ of $P_1$. In particular, $\gen{x_1}$ or $\gen{x_2}$ is a maximal subgroup of $P_1$ otherwise there exist maximal subgroups $M_1$ and $M_2$ of $P_1$ containing properly $\gen{x_1}$ and $\gen{x_2}$, respectively, and hence 
\[\{\gen{x_1},\gen{x_2,y},M_1,M_2\gen{y}\}\]
induces a square, which is impossible. So, by \cite[5.3.4]{djsr}, either $P_1\cong C_{p_1}\times C_{p_1}$ or $Q_8$, or there exists $x_3\in P_1$ such that $P_1=\gen{x_1}\gen{x_3}\neq\gen{x_2}\gen{x_3}$. In the latter case, 
\[\{\gen{x_1},\gen{x_2,y},P_1,\gen{x_3,y}\}\]
induces a square, which is a contradiction. Therefore, $P_1\cong C_{p_1}\times C_{p_1}$ or $Q_8$.

Finally assume that $m=1$. Then $G\cong C_{p_1^kp_2}$ and $\F(G)$ is an star graph with $P_1$ at the center and $\{HP_2:H\leqslant\Phi(P_1)\}$ is the set of pendants.

(3) $n=1$. Then $G=P_1$. Let $\{x_1,\dots,x_m\}$ be a minimal generating set for $G$. Since $G=\gen{P'_1,x_1}\cdots\gen{P'_1,x_m}$, we get $m\leqslant3$ by Lemma \ref{mutuallypermutingsubgroups}. First suppose that $m=3$. We show that $\gen{x,y}$ is a maximal subgroup of $G$ for all $2$-element subsets $\{x,y\}$ of any minimal generating set of $G$. If not, let $\{x,y\}$ be a counterexample and $\{x,y,z\}$ be a generating set for $G$. Let $M$ be a maximal subgroup of $G$ containing $\gen{x,y}$ properly. Then 
\[\{\gen{x,y},\gen{G',z},M,\gen{G',xz}\}\]
induces a square, which is a contradiction. Now, assume that $m=2$. Clearly, a subgroup $H$ of $G$ is a vertex if and only if $\Phi(G)<H\Phi(G)<G$, that is, $H=\gen{K,h}$ for some subgroup $K$ of $\Phi(G)$ and element $h\in G\setminus\Phi(G)$. If $\Phi(G)=XY$ is a proper factorization such that $\gen{X,x}\cap\Phi(G)\subset\Phi(G)$ and $\gen{Y,y}\cap\Phi(G)\subset\Phi(G)$ for a generating set $\{x,y\}$ of $G$, then 
\[\{\gen{X,x},\gen{Y,y},\gen{\Phi(G),x},\gen{\Phi(G),y}\}\]
induces a square, which is impossible. Hence, for every proper factorization $\Phi(G)=XY$ of $\Phi(G)$ and every generating set $\{x,y\}$ of $G$ either $\gen{X,x}\supseteq\Phi(G)$ or $\gen{Y,y}\supseteq\Phi(G)$. We show that $\Phi(G)$ is cyclic. Suppose on the contrary that $m:=d(\Phi(G))>1$. Let $\max(\Phi(G))$ denote the set of all maximal subgroups of $\Phi(G)$. Then 
\[|\max(\Phi(G))|=\frac{p^m-1}{p-1}=p^{m-1}+\cdots+p+1.\]
Clearly, $\gen{g}$ acts on $\max(\Phi(G))$ and that $\Phi(G)$ has $g$-invariant maximal subgroups for all $g\in G$. Our assumption implies that $\Phi(G)$ has a maximal subgroup $M$ such that $M$ is the only $g$-invariant maximal subgroup of $\Phi(G)$ for all elements $g\in G\setminus\Phi(G)$. Thus, $\ol{G}$ acts fixed-point-freely on $\max(\Phi(G))\setminus\{M\}$ so that $|\max(\Phi(G))|-1$ is a multiple of $p^2$, a contradiction. Therefore, $\Phi(G)$ is cyclic.

One can easily see that the groups in (1)--(7) have square-free factorization graphs. The proof is complete.
\end{proof}

All non-abelian finite simple groups with proper factorizations, as well as all their factorizations into maximal subgroups, are classified by Liebeck, Praeger, and Saxl \cite{mwl-cep-js}. For instance, it is known that $PSL(2,p^n)$ has no proper factorizations whenever $p^n\equiv1\pmod 4$ and $p^n\notin\{5,9,29\}$ (see also \cite[Theorem 3.3]{fs-mfdg}). If $H$ is an arbitrary non-abelian finite simple group with no proper factorizations and $p$ is a prime, then a simple verification shows that $H\times\Z_p$ has only the proper factorizations $G=(H\times\{1\})K$, where $\pi_1(K)$ is a proper subgroup of $H$ and $\pi_2(K)=\Z_p$ with $\pi_i$ being the projection on $i$th component for $i=1,2$. Therefore, $\F^*(G)$ is an star graph, that is, $G$ satisfies the conditions in the first part of Theorem \ref{4-cycle}.
%--------------------------------------------------
\begin{acknowledgment}
The authors are indebted to the referee whose suggestions and corrections improved the paper substantially.
\end{acknowledgment}
%==================================================

\end{document}